\def\Red#1{\textcolor{red}{#1}}
\newtheorem{theorem}{Theorem}
\newtheorem{definition}{Definition}
\newtheorem{example}{Example}
\newtheorem{lemma}{Lemma}
\newtheorem{remark}{Remark}
\def\sign{\hskip2pt{\rm sign}\hskip0pt}
\journal{ArXiv}
\begin{document}

\begin{frontmatter}



\title{A class of robust consensus algorithms with predefined-time convergence under switching topologies
\footnote{\Red{This is the preprint version of the accepted Manuscript: R. Aldana-López, D. Gómez-Gutiérrez, M. Defoort, J. D. Sánchez-Torres and A.~J.~Mu\~noz-V\'azquez, “A class of robust consensus algorithms with predefined-time convergence under switching topologies”, International Journal of Robust and Nonlinear Control, 2019, ISSN: 1099-1239. DOI. 10.1002/rnc.4715.
Please cite the publisher's version. For the publisher's version and full citation details see:
\url{http://dx.doi.org/10.1002/rnc.4715}
}}
}

\author[label0]{R.~Aldana-L\'opez}
\ead{rodrigo.aldana.lopez@gmail.com}

\author[label0,label1]{David~Gómez-Gutiérrez\corref{cor1}}
\ead{David.Gomez.G@ieee.org}
\cortext[cor1]{Corresponding Author.}
\author[label4]{M.~Defoort}
\ead{michael.defoort@univ-valenciennes.fr}
\author[label3]{J.~D.~S\'anchez-Torres}
\ead{dsanchez@iteso.mx}

\author[UAC]{A.~J.~Mu\~noz-V\'azquez}\ead{aldo.munoz.vazquez@gmail.com}

\address[label0]{Multi-agent autonomous systems lab, Intel Labs, Intel Tecnología de M\'exico, Av. del Bosque 1001, Colonia El Bajío, Zapopan, 45019, Jalisco, M\'exico.}
\address[label1]{Tecnologico de Monterrey, Escuela de Ingenier\'ia y Ciencias, Av. General Ram\'on Corona 2514, Zapopan, 45201, Jalisco, M\'exico.}

\address[label3]{Research Laboratory on Optimal Design, Devices and Advanced Materials -OPTIMA-, Department of Mathematics and Physics, ITESO, Perif\'erico Sur Manuel G\'omez Mor\'in 8585 C.P. 45604, Tlaquepaque, Jalisco, M\'exico.}
\address[label4]{LAMIH, CNRS UMR 8201, Univ. Valenciennes, Valenciennes 59313, France.}
\address[UAC]{CONACYT -- School of Engineering, Autonomous University of Chihuahua, Chihuahua, M\'exico.}

\begin{abstract}
This paper addresses the robust consensus problem under switching topologies. Contrary to existing methods, the proposed approach provides decentralized protocols that achieve consensus for networked multi-agent systems in a predefined time. Namely, the protocol design provides a tuning parameter that allows setting the convergence time of the agents to a consensus state.  An appropriate Lyapunov analysis exposes the capability of the current proposal to achieve predefined-time consensus over switching topologies despite the presence of bounded perturbations. Finally, the paper presents a comparison showing that the suggested approach subsumes existing fixed-time consensus algorithms and provides extra degrees of freedom to obtain predefined-time consensus protocols that are less over-engineered, i.e., the difference between the estimated convergence time and its actual value is lower in our approach. Numerical results are given to illustrate the effectiveness and advantages of the proposed approach.
\end{abstract}

\begin{keyword}
Consensus, Multi-agent systems, Fixed-time consensus, Predefined-time consensus, Switching topology.
\end{keyword}

\end{frontmatter}

\section{Introduction}

The problem of cooperative control for multi-agent systems has received significant attention due to its wide range of applications in different fields such as biology, power grid, robotics, etc. In cooperative control, a fundamental challenge is the consensus problem, whose objective is to design decentralized protocols such that all the agents, in the network, interact with their neighbors to reach a common value~\cite{Olfati-Saber2007}, called the consensus equilibrium. Many decentralized consensus-based methods have been applied, for instance, to flocking~\cite{Olfati2006}, formation control~\cite{Oh2015,Ren2007,Li2013} and distributed resource allocation~\cite{Xu2017,Xu2017b}. Two consensus problems have been mainly addressed, the leader-follower consensus problem~\cite{Defoort2015,8543491}, where the agents along the network converge to the state of the leader (real or virtual) which communicates its state only to a subset of agents; and the leaderless consensus problem~\cite{Zuo2014,Ning2017b,Zuo2016}, where the consensus equilibrium is a function of the initial conditions of the agents, for instance achieving consensus to the average value~\cite{Olfati2006}, the average min-max value~\cite{Cortes2006,Li2014}, the median value~\cite{Franceschelli2017}, and the maximum or minimum value~\cite{Liu2015} of the agents' initial conditions.

Many existing consensus algorithms focus on asymptotic convergence (i.e. the settling time is infinite) \cite{ren2005consensus} and finite-time convergence (i.e. the settling time estimate is finite but depends on the initial conditions) \cite{Wang2010,zhao2018edge}. It is clear that a finite settling time is useful for several applications (a network of clusters, manufacturing systems, etc.). However, the initial conditions are usually unknown due to the decentralized architecture. Therefore, recently, there has been a great deal of attention in the research community on algorithms which solve the decentralized consensus problem in a finite-time with uniform convergence with respect to the initial value of the agents (see. e.g~\cite{Gomez-Gutierrez2018} and the references therein). 

Two main approaches have been proposed to address this problem. The first one is based on the fixed-time stability theory of autonomous systems proposed in~\cite{Polyakov2012}. Based on this class of systems, fixed-time consensus algorithms have been proposed either based on the homogeneity theory developed in~\cite{Andrieu2008,Polyakov2016}, as in~\cite{Gomez-Gutierrez2018}, the settling-time estimation provided in~\cite{Polyakov2012}, as in~\cite{Zuo2014a,Parsegov2013} or the subclass of fixed-time stable systems, with an upper bound of the settling time that is less conservative than the one in~\cite{Polyakov2012}, presented in~\cite{Parsegov2012}, as in~\cite{Parsegov2013,Zuo2014,Defoort2015,Ning2017b,Wang2017b}. However, in spite of the advantages in terms of settling time estimation, this approach does not enable to easily arbitrarily preassigned the settling time. For this case, the consensus problem for agents with single integrator dynamics and affected by disturbances has been addressed in~\cite{Zuo2016,Ning2017b}.

The second approach is based on time-varying consensus protocols and provides consensus protocols with predefined-time convergence, where the convergence time is set a priori as a parameter of the protocol~\cite{Yong2012,Liu2018,Wang2017,Wang2018,Colunga2018b,Zhao2018}. In these works, non-conservative upper-bound estimates of the settling time are provided. However, this approach presents some drawbacks. First, they are based on a common time-varying gain that is applied to each node on the network, which requires a common time-reference along the network. Second, this time-varying gain becomes singular at the predefined-time, such as methods based on the so-called time base generators~\cite{Morasso1997} see for instance~\cite{Yong2012}. Third, methods, such as~\cite{Liu2018,Zhao2018}, are based on a time-varying gain that is a piecewise-constant function that produces Zeno phenomenon~\cite{Zhang2001}, even in the unperturbed case. 
Additionally, the robustness, against external disturbances, of the methods proposed in~\cite{Yong2012,Liu2018,Wang2017,Wang2018,Zhao2018} has not been analyzed.

In this paper, we address the robust predefined-time leaderless consensus problem for the cases where a common time reference is not available. Thus, we focus on developing autonomous protocols. Our approach is based on recent results on the upper bound estimation for the settling time of a class of fixed-time stable systems presented in~\cite{Aldana-Lopez2018} to propose a broader class of consensus protocols for perturbed first-order agents with a fixed-time convergence whose convergence upper bound is set a priori as a parameter of the system. In our opinion,~\cite{Ning2017b} is the closest approach in the literature, as it presents robust fixed-time consensus protocols that can be designed to satisfy time constraints. We will show that the results in~\cite{Ning2017b} and other fixed-time consensus protocols, such as~\cite{Zuo2014,Zuo2014a,Wang2017b}, are subsumed by our approach since our results allow more flexibility in the parameter selection to obtain, for instance, predefined-time consensus protocols where the slack between the real-convergence time and the estimated upper bound of the convergence time is lower. Contrary to previous fixed-time consensus protocols~\cite{Parsegov2013,Zuo2014,Zuo2014a,Wang2017b,Zuo2016,Ning2017b} our approach allows to straightforwardly specify the upper bound of the convergence time as a parameter of the consensus protocol. We presents two robust consensus protocols. The first one is the one that is computationally simpler; we show that it presents predefined-time convergence under static networks and fixed-time convergence under switched dynamic networks. The second one, in the absence of disturbances, converges to a consensus state that is the average of the agents' initial conditions. This algorithm is shown to be a robust predefined-time consensus algorithm for static and dynamic networks arbitrarily switching among connected graphs.

The rest of the manuscript is organized as follows. Section~\ref{Sec.Preliminaries} introduces the preliminaries on graph theory, finite-time stability and predefined-time stability. Section~\ref{Sec:Main} presents two approaches to address the robust consensus problem for dynamic networks. Section~\ref{Sec:Disc} presents a comparison with~\cite{Ning2017b} showing that having a broader class of predefined-time consensus algorithms provides greater flexibility to improve the performance of the consensus algorithms. This section also presents a qualitative comparison of both protocols with respect to other fixed-time consensus protocols previously proposed in the literature highlighting the contribution of our approach. Finally, Section~\ref{Sec:Concl} presents some concluding remarks.

\section{Preliminaries}\label{Sec.Preliminaries}

\subsection{Graph Theory}
\label{SubSec.GraphTheory}
The following preliminaries on graph theory are from~\cite{godsil2001}. In this paper, we will focus only on undirected graphs.

\begin{definition}
A graph $\mathcal{X}$ consists of a set of vertices $\mathcal{V}(\mathcal{X})$ and a set of edges $\mathcal{E}(\mathcal{X})$ where an edge is an unordered pair of distinct vertices of $\mathcal{X}$. 
\end{definition}

Writing $ij$ denotes an edge and $j\sim i$ denotes that the vertex $i$ and vertex $j$ are adjacent or neighbors, i.e., there exists an edge $ij$. The set of neighbors of $i$ in the graph $\mathcal{X}$ is expressed by $\mathcal{N}_i(\mathcal{X})=\{j:ji\in \mathcal{E}(\mathcal{X})\}$.


\begin{definition}
A path from $i$ to $j$ in a graph is a sequence of distinct vertices starting with $i$ and ending with $j$ such that consecutive vertices are adjacent. If there is a path between any two vertices of the graph $\mathcal{X}$ then $\mathcal{X}$ is said to be connected. Otherwise, it is said to be disconnected. 
\end{definition}

\begin{definition}
A weighted graph is a graph together with a weight function $\mathcal{W}:\mathcal{E}(\mathcal{X})\to \mathbb{R}_{+}$.
\end{definition}

\begin{definition}
Let $\mathcal{X}$ be a weighted graph such that $ij\in\mathcal{E}(\mathcal{X})$ has weight $a_{ij}$ and let $n=|\mathcal{V}(\mathcal{X})|$. Then, the adjacency matrix $A$ is an $n\times n$ matrix where $A=[a_{ij}]$.
\end{definition}

\begin{definition}
Let $\mathcal{X}$ be a graph, the Laplacian of $\mathcal{X}$ is denoted by $\mathcal{Q}(\mathcal{X})$ (or simply $\mathcal{Q}$ when the graph is clear from the context) and is defined as $\mathcal{Q}(\mathcal{X})=\Delta(\mathcal{X})-A$ where $\Delta(\mathcal{X})=\text{diag}(d_1\cdots,d_n)$ with $d_i=\sum\limits_{j\in\mathcal{N}_i(\mathcal{X})}a_{ij}$. 
\end{definition}

\begin{remark}\label{remark:Lapl_factorization} For the Laplacian $\mathcal{Q}(\mathcal{X})$, there exists a factorization  $\mathcal{Q}(\mathcal{X})=D(\mathcal{X})D(\mathcal{X})^T$ ($D(\mathcal{X})$ is known as the incidence matrix of $\mathcal{X}$~\cite{godsil2001}) where $D(\mathcal{X})$ is an $\vert \mathcal{V}(\mathcal{X})\vert \times \vert \mathcal{E}(\mathcal{X})\vert$ matrix, such that if $ij\in \mathcal{E}(\mathcal{X})$ is an edge with weight $a_{ij}$ then the column of $D$ corresponding to the edge $ij$ has only two nonzero elements: the $i-$th element is equal to $\sqrt{a_{ij}}$ and the $j-$th element is equal to $-\sqrt{a_{ij}}$. Clearly, the incidence matrix $D(\mathcal{X})$, satisfies $\mathbf{1}^TD(\mathcal{X})=0$.
The Laplacian matrix $\mathcal{Q}(\mathcal{X})$ is a positive semidefinite and symmetric matrix. Thus, its eigenvalues are all real and non-negative.
\end{remark}

When the graph $\mathcal{X}$ is clear from the context we omit $\mathcal{X}$ as an argument. For instance we write $Q$, $D$, etc to represent the Laplacian, the incidence matrix, etc.

\begin{lemma}~\cite{godsil2001}
\label{lemma:Lambda2}
Let $\mathcal{X}$ be a connected graph and $\mathcal{Q}$ its Laplacian. The eigenvalue $\lambda_1(\mathcal{Q})=0$ has algebraic multiplicity one with eigenvector $\mathbf{1}=[1\ \cdots\ 1]^T$. The smallest nonzero eigenvalue of $\mathcal{Q}$, denoted by $\lambda_2(\mathcal{Q})$ satisfies $\lambda_2(\mathcal{Q})=\underset{x\perp \mathbf{1},x\neq 0}{\min}\dfrac{x^T \mathcal{Q}x}{x^Tx}$. 
\end{lemma}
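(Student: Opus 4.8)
The plan is to treat the two claims separately, exploiting the factorization $\mathcal{Q}=DD^T$ from Remark~\ref{remark:Lapl_factorization} together with the fact, also recorded there, that $\mathcal{Q}$ is symmetric and positive semidefinite. The whole argument reduces the spectral statements to elementary linear algebra once the kernel of $\mathcal{Q}$ has been identified.

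First I would establish that $0$ is an eigenvalue with eigenvector $\mathbf{1}$ and that its algebraic multiplicity is one. From $\mathbf{1}^T D=0$ we get $D^T\mathbf{1}=0$, hence $\mathcal{Q}\mathbf{1}=D(D^T\mathbf{1})=0$, so $\mathbf{1}\in\ker\mathcal{Q}$ and $\lambda_1(\mathcal{Q})=0$. Since $\mathcal{Q}$ is symmetric it is diagonalizable, so its algebraic and geometric multiplicities coincide; it therefore suffices to show that $\ker\mathcal{Q}$ is one-dimensional. For any $x$ with $\mathcal{Q}x=0$ we have $0=x^T\mathcal{Q}x=\norm{D^T x}^2$, so $D^T x=0$. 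By the structure of $D$, the component of $D^T x$ associated with an edge $ij\in\mathcal{E}(\mathcal{X})$ equals $\sqrt{a_{ij}}(x_i-x_j)$, whence $x_i=x_j$ for every pair of adjacent vertices. Because $\mathcal{X}$ is connected, any two vertices are joined by a path, and propagating these equalities along such a path forces $x$ to be constant, i.e. $x=c\mathbf{1}$. Thus $\ker\mathcal{Q}=\mathrm{span}\{\mathbf{1}\}$ and the multiplicity is one.

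Second, for the variational characterization I would invoke the spectral theorem. Let $0=\lambda_1<\lambda_2\le\cdots\le\lambda_n$ be the eigenvalues of $\mathcal{Q}$ (the strict first inequality is exactly the multiplicity-one fact just proved) with an orthonormal eigenbasis $v_1,\dots,v_n$, where $v_1=\mathbf{1}/\sqrt{n}$. Any $x\perp\mathbf{1}$ with $x\neq 0$ expands as $x=\sum_{k=2}^n c_k v_k$, so that the Rayleigh quotient becomes $x^T\mathcal{Q}x/x^Tx=\left(\sum_{k=2}^n\lambda_k c_k^2\right)/\left(\sum_{k=2}^n c_k^2\right)\ge\lambda_2$, using $\lambda_k\ge\lambda_2$ for $k\ge2$. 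Equality is attained at $x=v_2$, which is admissible since $v_2\perp v_1$ and $v_1$ is parallel to $\mathbf{1}$. Hence the minimum over $\{x\perp\mathbf{1},\,x\neq0\}$ equals $\lambda_2(\mathcal{Q})$.

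I expect the only genuinely nontrivial step to be the connectivity argument in the first part, since it is the single place where the graph-theoretic hypothesis actually enters: one must argue carefully that $D^T x=0$ together with connectedness forces $x$ to be globally constant, rather than merely constant on each connected component. Everything else is standard (the spectral theorem and the Courant--Fischer-type bound), and the factorization $\mathcal{Q}=DD^T$ does most of the work by collapsing the kernel computation to the condition $D^T x=0$.
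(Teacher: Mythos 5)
Your proof is correct. The paper offers no proof of this lemma---it is quoted verbatim from \cite{godsil2001}---and your two-step argument (identifying $\ker\mathcal{Q}=\mathrm{span}\{\mathbf{1}\}$ via the factorization $\mathcal{Q}=DD^{T}$, the edge-wise reading of $D^{T}x=0$ as $\sqrt{a_{ij}}(x_i-x_j)=0$, and connectedness, followed by the spectral-theorem/Rayleigh-quotient characterization of $\lambda_2$ with attainment at $v_2$) is precisely the standard argument found in that reference, so there is nothing to add.
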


It follows from Lemma~\ref{lemma:Lambda2} that for every $x\bot\mathbf{1}$, $x^T\mathcal{Q}x\geq \lambda_2(\mathcal{Q}) \Vert x \Vert_2^2>0$. $\lambda_2(\mathcal{Q})$ is known as the algebraic connectivity of the graph $\mathcal{X}$. The distance between two distinct nodes $i$ and $j$ is the shortest path length between them. The diameter of $\mathcal{X}$ is the longest distance between two distinct vertices. 

For a path graph and cycle graph of $n$ nodes, $P_n$ and $C_n$, $\lambda_2$ can be computed as $\lambda_2(P_n)=2-2\cos\left(\pi/n\right)$ and  $\lambda_2(C_n)=2-2\cos\left(2\pi/n\right)$, respectively. For a star graph $S_n$, $\lambda_2(S_n)=1$. Graphs that are more connected have a larger $\lambda_2$. A path graph $P_n$ is the ``most nearly disconnected" connected graph of $n$ nodes.
\begin{definition}
A switched dynamic network $\mathcal{X}_{\sigma(t)}$ is described by the ordered pair $\mathcal{X}_{\sigma(t)}=\langle\mathcal{F},\sigma\rangle$ where $\mathcal{F}=\{\mathcal{X}_1,\ldots,\mathcal{X}_m\}$ is a collection of graphs having the same vertex set and $\sigma:[t_0,\infty)\rightarrow \{1,\ldots m\}$ is a switching signal determining the topology of the dynamic network at each instant of time. 
\end{definition}

In this paper, we assume that $\sigma(t)$ is generated exogenously and that there is a minimum dwell time between consecutive switchings in such a way that Zeno behavior in network's dynamic is excluded, i.e., there is a finite number of switchings in any finite interval.



\subsection{On finite-time, fixed-time and predefined-time stability}
Consider the system
\begin{equation} \label{eq:sys}
\dot{x}=f(x;\rho), 
\end{equation}
where $x\in\mathbb{R}^n$ is the system state, the vector $\rho\in\mathbb{R}^b$ stands for the parameters of system~\eqref{eq:sys} which are assumed to be constant, i.e., $\dot{\rho}=0$. Furthermore, there is no limit for the number of parameters, so $b$ can take any value in the natural number set $\mathbb{N}$. The function $f:\mathbb{R}^n\rightarrow\mathbb{R}^n$ is nonlinear, and the origin is assumed to be an equilibrium point of system~\eqref{eq:sys}, so $f(0;\rho)=0$. The initial condition of this system is $x_0=x(0)\in\mathbb{R}^n$.

\begin{definition}\cite{Bhat2000} \label{def:finite} The origin of~\eqref{eq:sys} is \textit{globally finite-time stable} if it is globally asymptotically stable and any solution $x(t,x_0)$ of \eqref{eq:sys} reaches the equilibrium point at some finite time moment, i.e., $\forall t\geq T(x_0):x(t,x_0)=0$, where $T:\mathbb{R}^{n}\rightarrow \mathbb{R}_{+}\cup \{0\}$ is called the \textit{settling-time function}.
\end{definition}

\begin{definition}\cite{Polyakov2012} \label{def:fixed} The origin of~\eqref{eq:sys} is \textit{fixed-time stable} if it is globally finite-time stable and the settling-time function is bounded, i.e. $\exists T_{\text{max}}>0:\forall x_0\in\mathbb{R}^{n}:T(x_0)\leq T_{\text{max}}$.
\end{definition}

To address the fixed-time stability analysis, one approach is based on the homogeneity theory.
\begin{definition}
\label{Def.Homogeneous}
\cite{Bhat2005}
A function $g:\mathbb{R}^n\rightarrow\mathbb{R}$ is homogeneous of degree $l$ with respect to the standard dilation if and only if
 $$g(\lambda x)=\lambda^l g(x)\ \ \text{for all } \lambda>0.$$
A vector field $f(x)$, with $x\in\mathbb{R}^n$, is homogeneous of degree $d$ with respect to the the standard dilation if
$$
f(x)=\lambda^{-(d+1)} f(\lambda x) \ \ \text{for all } \lambda>0.
$$ 
\end{definition}

\begin{definition}
\label{Def:Bilimit}
\cite{Andrieu2008,Polyakov2016}
A function $g:\mathbb{R}^n\rightarrow\mathbb{R}$, such that $g(0)=0$, is homogeneous in the $\lambda_0-$limit with degree $d_{\lambda_0}$ with respect to the standard dilation if $g_{\lambda_0}:\mathbb{R}^n\rightarrow \mathbb{R}$, defined as
$$
g_{\lambda_0}(x)=\lim_{\lambda\rightarrow\lambda_0}\lambda^{-d_{\lambda_0}}g(\lambda x),
$$
is homogeneous of degree $d_{\lambda_0}$ with respect to the standard dilation.\\
A vector field $f:\mathbb{R}^n\rightarrow\mathbb{R}^n$ is said to be homogeneous in the $\lambda_0-$limit with degree $d_{\lambda_0}$ with respect to the standard dilation if the vector field $f_{\lambda_0}:\mathbb{R}^n\rightarrow \mathbb{R}^n$, defined as
\begin{equation}
\label{Eq:Hom}
f_{\lambda_0}(x)=\lim_{\lambda\rightarrow\lambda_0}\lambda^{-(d_{\lambda_0}+1)}f(\lambda x),
\end{equation}
is homogeneous of degree $d_{\lambda_0}$ with respect to the standard dilation.
\end{definition}

A characterization of fixed-time stability, based on the homogeneity theory, is given in the following theorem.

\begin{theorem}
\label{Th:Fixed}
\cite{Andrieu2008,Polyakov2016}
Let the vector field $f:\mathbb{R}^n\rightarrow\mathbb{R}^n$ be homogeneous in the $0-$limit with degree $d_0<0$ and homogeneous in the $+\infty$-limit with degree $d_\infty>0$ (if both conditions are satisfied it is said that the vector field f(x) is homogeneous in the bi-limit). If for the dynamic systems $\dot{x}=-f(x)$, $\dot{x}=-f_0(x)$ and $\dot{x}=-f_\infty(x)$ the origin is globally asymptotically stable (where $f_0$ and $f_{\infty}$ are obtained from~\eqref{Eq:Hom} with $\lambda_0=0$ and $\lambda_0=+\infty$, respectively), then the origin of $\dot{x}=-f(x)$ is a globally fixed-time stable equilibrium.
\end{theorem}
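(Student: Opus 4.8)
The plan is to reduce the stability analysis to a single scalar differential inequality for a suitable Lyapunov function and then integrate it, exploiting the fact that a strictly negative homogeneity degree forces finite-time convergence \emph{into} a neighborhood of the origin, while a strictly positive homogeneity degree forces finite-time convergence \emph{from infinity} into a bounded set, with a bound that does not depend on how far the initial condition lies. The sum of these two times is precisely the uniform settling-time bound demanded by Definition~\ref{def:fixed}.

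First I would invoke the converse Lyapunov theorem for vector fields homogeneous in the bi-limit from~\cite{Andrieu2008,Polyakov2016}: since $\dot{x}=-f(x)$, $\dot{x}=-f_0(x)$ and $\dot{x}=-f_\infty(x)$ are all globally asymptotically stable, there exists a $C^1$, positive definite and proper function $V:\mathbb{R}^n\to\mathbb{R}_{\geq 0}$ that is itself homogeneous in the bi-limit, with positive degrees $d_{V,0}$ and $d_{V,\infty}$, and whose $0$- and $\infty$-limit approximations $V_0$, $V_\infty$ are Lyapunov functions for $-f_0$, $-f_\infty$ respectively. Differentiating along $\dot{x}=-f(x)$ gives $\dot{V}=-W(x)$ with $W=\nabla V\cdot f$, where, by construction, $W$ is positive definite and homogeneous in the bi-limit with degrees $d_{W,0}=d_{V,0}+d_0$ and $d_{W,\infty}=d_{V,\infty}+d_\infty$.

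Next I would apply the comparison lemma for functions homogeneous in the bi-limit to bound $W$ from below by powers of $V$. Because $W$ and $V$ are both positive definite and homogeneous in the bi-limit, there exist constants $k_0,k_\infty>0$ with
\begin{equation}
\dot{V}\leq -k_0\,V^{\,p_0}\ \text{ for } V\leq 1, \qquad \dot{V}\leq -k_\infty\,V^{\,p_\infty}\ \text{ for } V\geq 1,
\end{equation}
where $p_0=1+d_0/d_{V,0}<1$ (since $d_0<0$) and $p_\infty=1+d_\infty/d_{V,\infty}>1$ (since $d_\infty>0$). The point that needs care here is that these comparisons must hold \emph{uniformly}, including in the intermediate region where $x$ is neither close to the origin nor large; this is handled by the continuity and positive definiteness of $W$ together with the compactness of the level set $\{V=1\}$, on which $W$ is bounded below by a positive constant, the homogeneity in each limit then propagating the bound.

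Finally I would integrate the two inequalities by the standard comparison argument. From any $x_0$ with $V(x_0)\geq 1$, the inequality with $p_\infty>1$ shows that $\{V\leq 1\}$ is reached in time at most $1/(k_\infty(p_\infty-1))$, a bound \emph{independent} of $V(x_0)$; once inside $\{V\leq 1\}$, the inequality with $p_0<1$ drives $V$ to zero in time at most $1/(k_0(1-p_0))$. Hence every trajectory reaches the origin before
\begin{equation}
T_{\text{max}}=\frac{1}{k_\infty(p_\infty-1)}+\frac{1}{k_0(1-p_0)}=\frac{d_{V,\infty}}{k_\infty\,d_\infty}+\frac{d_{V,0}}{k_0\,\abs{d_0}},
\end{equation}
which is finite and uniform over all $x_0\in\mathbb{R}^n$, establishing global fixed-time stability. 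I expect the main obstacle to be the first step: justifying the existence of a Lyapunov function that is simultaneously homogeneous in both limits and whose derivative is negative definite everywhere — not merely for the two limit vector fields — which is exactly the content of the converse homogeneous-bi-limit theory I would import from~\cite{Andrieu2008,Polyakov2016}.
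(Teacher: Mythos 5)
Your proposal is correct and follows essentially the same route as the proof the paper relies on: Theorem~\ref{Th:Fixed} is stated in the paper without proof, imported from \cite{Andrieu2008,Polyakov2016}, and your argument — a converse Lyapunov function homogeneous in the bi-limit, comparison of $\dot{V}$ against $V^{p_0}$ with $p_0<1$ on $\{V\leq 1\}$ and against $V^{p_\infty}$ with $p_\infty>1$ on $\{V\geq 1\}$, followed by two-phase integration giving a settling bound independent of $x_0$ — is precisely the standard argument in those references. The one point you use tacitly is that the converse theorem allows choosing the homogeneity degrees of $V$ large enough (e.g.\ $d_{V,0}>-d_0$) so that the derivative $W=\nabla V\cdot f$ has positive bi-limit degrees, making the power comparison legitimate; this freedom is indeed part of the cited converse result, so no gap remains.
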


However, despite the advantages of having a finite settling time which allows uniform convergence with respect to the initial conditions, the homogeneity based approach does not enable to easily arbitrarily preassigned the settling time. For some applications such as state estimation, dynamic optimization, consensus of cluster networks, among others, it would be convenient that the trajectories of system~\eqref{eq:sys} reach the origin within a time $T_c\in\mathcal{T}$, which can be defined in advance as a function of the system parameters, i.e., $T_c=T_c(\rho)$. This motivates the following definitions.


\begin{definition}\cite{Sanchez-Torres2014} \label{def:min_bound} Let the origin be fixed-time stable for system~\eqref{eq:sys}. The set of all the bounds of the settling-time function is defined as \[\mathcal{T}=\left\lbrace T_\text{max}\in\mathbb{R}_+ : T(x_0)\leq T_\text{max}, \; \forall \, x_0 \in \mathbb{R}^n \right\rbrace.\]
\end{definition}

\begin{definition} \cite{Sanchez-Torres2015,Sanchez-Torres2018} \label{def:predefined} For the  parameter vector  $\rho$ of system~\eqref{eq:sys} and a constant $T_c:=T_c(\rho)>0$, the origin of~\eqref{eq:sys} is said to be \textit{predefined-time stable} 
if it is fixed-time stable and the settling-time function $T:\mathbb{R}^n\rightarrow\mathbb{R}$ is such that \[T(x_0)\leq T_c, \quad \forall x_0\in\mathbb{R}^n.\] $T_c$ is called a \textit{predefined time}.
\end{definition}

\begin{remark}\label{rem:uncertain} It would be desirable to choose $T_c=T_c(\rho)$ not only as a bound of the settling-time function $T_c\in\mathcal{T}$, but as the least upper bound, i.e., $T_c=\min \mathcal{T} = \sup_{x_0 \in \mathbb{R}^n} T(x_0)$. However, this selection requires complete knowledge about the system, compromising its application to decentralized systems.
\end{remark}

Let us recall some important results concerning predefined-time stability. 

\begin{theorem}\cite[Theorem~1]{Aldana-Lopez2018}
\label{th:tf_poly}
Consider system
\begin{equation}
\label{Eq:HomFixedPoly}
\dot{x} = -(\alpha|x|^{p} + \beta|x|^{q})^k \sign{x}, \ \ x(0)=x_0,
\end{equation}
where $x\in\mathbb{R}$, $t\in\left[0,+\infty\right)$. The parameters of the system are the real numbers $\alpha,\beta,p,q,k>0$ which satisfy the constraints $kp<1$ and $kq>1$.
Let $\rho$ be the parameter vector $\rho=\left[\alpha \, \beta \, p \, q \, k\right]^T\in\mathbb{R}^5$ of~\eqref{Eq:HomFixedPoly}. Then, the origin $x=0$ of system~\eqref{Eq:HomFixedPoly} is fixed-time stable and the settling time function satisfies $T_f=\gamma(\rho)$, where 
\begin{equation}
    \label{Eq:MinUpperEstimate}
    \gamma(\rho)=\frac{\Gamma \left(\frac{1-k p}{q-p}\right) \Gamma \left(\frac{k q-1}{q-p}\right)}{\alpha^{k}\Gamma (k) (q-p)}\left(\frac{\alpha}{\beta}\right)^{\frac{1-k p}{q-p}},
    \end{equation}
and $\Gamma(\cdot)$ is the Gamma function defined as $\Gamma(z)=\int_0^{+\infty} e^{-t}t^{z-1}dt$~\cite[Chapter~1]{Bateman1953}.
\end{theorem}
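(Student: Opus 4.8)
The plan is to reduce the problem to a one-dimensional integral by direct separation of variables, compute the total convergence time as an improper integral over the full state range, and then recognize the resulting integral as a Beta-function (hence Gamma-function) expression. Because the dynamics \eqref{Eq:HomFixedPoly} is scalar and odd in $x$ (the right-hand side is negative for $x>0$ and positive for $x<0$), it suffices to analyze $x_0>0$; the trajectory decreases monotonically toward the origin and, by symmetry, the case $x_0<0$ is identical.

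**First I would** establish that the origin is fixed-time stable. For $x>0$ the equation reads $\dot x = -(\alpha x^{p}+\beta x^{q})^{k}$, and since the right-hand side is strictly negative and bounded away from zero on any set $x\ge\varepsilon>0$, the solution reaches any neighborhood of the origin in finite time; the constraint $kp<1$ guarantees the vector field is homogeneous in the $0$-limit with negative degree $d_0=kp-1<0$, while $kq>1$ gives homogeneity in the $+\infty$-limit with positive degree $d_\infty=kq-1>0$, so Theorem~\ref{Th:Fixed} already yields fixed-time stability. The real content is the \emph{exact} settling-time formula, which does not follow from homogeneity alone.

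**The main computation** is to separate variables. Writing $T_f=\int_0^{x_0}\frac{dx}{(\alpha x^{p}+\beta x^{q})^{k}}$ and then letting $x_0\to+\infty$ to obtain the uniform bound $\sup_{x_0}T(x_0)$, I would factor out $\alpha^{k}x^{kp}$ to get
\begin{equation}
T_f=\frac{1}{\alpha^{k}}\int_0^{\infty}\frac{x^{-kp}}{\bigl(1+\tfrac{\beta}{\alpha}x^{q-p}\bigr)^{k}}\,dx.
\end{equation}
The substitution $u=\tfrac{\beta}{\alpha}x^{q-p}$ converts this into a standard Beta integral $\int_0^\infty \frac{u^{a-1}}{(1+u)^{a+c}}\,du=B(a,c)=\frac{\Gamma(a)\Gamma(c)}{\Gamma(a+c)}$, with the exponents forced to be $a=\frac{1-kp}{q-p}$ and $c=\frac{kq-1}{q-p}$ so that $a+c=k$. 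The hypotheses $kp<1$ and $kq>1$ are exactly what make $a>0$ and $c>0$, ensuring convergence of the integral at both endpoints.

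**The step I expect to require the most care** is bookkeeping the substitution and verifying that the accumulated constants assemble into \eqref{Eq:MinUpperEstimate}: tracking the factor $\tfrac{\beta}{\alpha}$ through $du=\tfrac{\beta}{\alpha}(q-p)x^{q-p-1}\,dx$, collapsing the powers of $x$ correctly, and confirming that $\Gamma(a+c)=\Gamma(k)$ appears in the denominator while the prefactor $\bigl(\tfrac{\alpha}{\beta}\bigr)^{(1-kp)/(q-p)}$ and the $\tfrac{1}{q-p}$ emerge from the Jacobian. I would finally note that since the integrand is positive and the antiderivative is monotone in $x_0$, the value $\gamma(\rho)$ obtained in the limit $x_0\to\infty$ is the supremum of $T(x_0)$, so it is simultaneously a valid bound and the least upper bound, which is precisely the $T_f=\gamma(\rho)$ claimed in the statement.
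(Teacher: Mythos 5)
Your proposal is correct: the paper itself states this theorem without proof, importing it verbatim from \cite{Aldana-Lopez2018}, and your separation-of-variables computation---writing $T(x_0)=\int_0^{|x_0|}(\alpha z^{p}+\beta z^{q})^{-k}\,dz$, letting $x_0\to\infty$, and reducing via $u=\tfrac{\beta}{\alpha}x^{q-p}$ to the Beta integral $\int_0^\infty u^{a-1}(1+u)^{-(a+c)}\,du=\Gamma(a)\Gamma(c)/\Gamma(a+c)$ with $a=\tfrac{1-kp}{q-p}$, $c=\tfrac{kq-1}{q-p}$, $a+c=k$---is essentially the argument of that reference. Your bookkeeping assembles exactly into \eqref{Eq:MinUpperEstimate}, and your two side observations (that $kp<1$ and $kq>1$ are precisely the integrability conditions at $0$ and $\infty$, and that $\gamma(\rho)$ is the unattained least upper bound of the settling-time function) likewise match the source.
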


Using Theorem \ref{th:tf_poly}, one can obtain the following Lyapunov based characterization of predefined-time stable systems.  


\begin{theorem}\cite[Theorem~3]{Aldana-Lopez2018}\label{thm:weak_pt}
Consider the nonlinear system
\begin{equation}\label{Eq:NonSystemDyn}
    \dot{x}=f(x;\rho), \ \ x(0)=x_0,
\end{equation}
where $x\in\mathbb{R}^n$ is the system state, the vector $\rho\in\mathbb{R}^b$ stands for the system parameters which are assumed to be constant. The function $f:\mathbb{R}^n\rightarrow\mathbb{R}^n$ is such that $f(0;\rho)=0$. Assume there exists a continuous radially unbounded function $V:\mathbb{R}^n\to\mathbb{R}$ such that:
\begin{align*}
V(0) &=0, \\
V(x) &>0, \qquad \forall x\in\mathbb{R}^n\setminus\{0\},
\end{align*}
and the derivative of $V$ along the trajectories of~\eqref{Eq:NonSystemDyn} satisfies
 \begin{equation}\label{eq:dV_weak}
 \mathcal{D}^{+}V(x)\leq-\frac{\gamma(\rho)}{T_c}\left(\alpha V(x)^p+\beta V(x)^q\right)^k, \qquad \forall x\in\mathbb{R}^n\setminus\{0\}.
 \end{equation}  
where $\alpha,\beta,p,q,k>0$, $kp<1$, $kq>1$ and $\gamma$ is given in~\eqref{Eq:MinUpperEstimate} and $\mathcal{D}^{+}V$ is the upper right-hand Dini derivative of $V(x)$ \cite{Kannan2012}.

Then, the origin of \eqref{Eq:NonSystemDyn} is predefined-time-stable with predefined time $T_c$. 
\end{theorem}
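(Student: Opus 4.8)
The plan is to establish predefined-time stability by a comparison argument that reduces the multidimensional problem to the scalar system already analyzed in Theorem~\ref{th:tf_poly}. The essential observation is that $V$ is positive definite and radially unbounded, so it is a proper scalar surrogate for the state, and the Dini inequality \eqref{eq:dV_weak} forces $V$ to decay at least as fast as the solution of a one-dimensional initial value problem whose settling time can be computed exactly.

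First I would introduce the scalar comparison system
\begin{equation}
\dot{y}=-\frac{\gamma(\rho)}{T_c}\left(\alpha y^{p}+\beta y^{q}\right)^{k}, \qquad y(0)=V(x_0),
\end{equation}
on the nonnegative half-line $y\geq 0$. Because $V(x)\geq 0$ along every trajectory, only this half-line is relevant, and there $\sign{y}=1$ and $\abs{y}=y$, so the right-hand side is smooth for $y>0$ and vanishes at $y=0$. Invoking the comparison principle for the upper right Dini derivative (taking the maximal solution of the comparison system), the inequality \eqref{eq:dV_weak} together with $y(0)=V(x_0)$ yields $0\leq V(x(t))\leq y(t)$ for all $t\geq 0$.

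Next I would relate this comparison system to \eqref{Eq:HomFixedPoly} by a linear time rescaling. Setting $s=\tfrac{\gamma(\rho)}{T_c}\,t$ converts the comparison equation into $\tfrac{dy}{ds}=-\left(\alpha y^{p}+\beta y^{q}\right)^{k}$, which is exactly \eqref{Eq:HomFixedPoly} restricted to $y\geq 0$. By Theorem~\ref{th:tf_poly} this rescaled system reaches the origin at time $s=\gamma(\rho)$ for every initial value, and since $s=\gamma(\rho)$ corresponds to $t=\tfrac{T_c}{\gamma(\rho)}\,\gamma(\rho)=T_c$, the comparison solution satisfies $y(t)=0$ for all $t\geq T_c$, independently of $V(x_0)$. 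Combining this with $0\leq V(x(t))\leq y(t)$ gives $V(x(t))=0$, hence $x(t)=0$, for all $t\geq T_c$ and every $x_0$; the negative definiteness of the right-hand side of \eqref{eq:dV_weak} together with the radial unboundedness of $V$ supplies the global asymptotic stability required before the finite-time reaching, so the origin is predefined-time stable with predefined time $T_c$.

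I expect the main obstacle to be the careful justification of the comparison step rather than the time-scaling bookkeeping. The comparison field $y\mapsto -\tfrac{\gamma(\rho)}{T_c}(\alpha y^{p}+\beta y^{q})^{k}$ is only continuous, not Lipschitz, at $y=0$ (since $kp<1$), so one must argue with the maximal solution of the comparison system and confirm that forward uniqueness holds away from the origin, where the field is smooth, so that the monotone decay to zero and the subsequent invariance at zero are unambiguous. One should also note that the differential inequality is stated for the Dini derivative only on $\mathbb{R}^n\setminus\{0\}$, so the comparison conclusion is first obtained on the open interval where $x(t)\neq 0$ and then extended by continuity up to and beyond the reaching time.
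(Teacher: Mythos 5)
Your proposal is correct and coincides with the intended proof: the paper itself states Theorem~\ref{thm:weak_pt} as an imported result from \cite{Aldana-Lopez2018} without reproving it, and the proof there is precisely your argument --- a Dini-derivative comparison lemma (with the maximal solution, since the scalar field is only continuous at the origin) against the scalar system $\dot{y}=-\frac{\gamma(\rho)}{T_c}(\alpha y^{p}+\beta y^{q})^{k}$, followed by the time rescaling $s=\frac{\gamma(\rho)}{T_c}t$ that reduces it to~\eqref{Eq:HomFixedPoly} and the settling-time estimate of Theorem~\ref{th:tf_poly}. One cosmetic imprecision: the rescaled comparison solution reaches the origin at $s=T(y(0))\leq\gamma(\rho)$, with $\gamma(\rho)$ attained only in the limit $y(0)\to\infty$, not exactly at $s=\gamma(\rho)$ for every initial value; since only the upper bound is needed, your conclusion $y(t)=0$ for $t\geq T_c$, and hence the predefined-time bound, is unaffected.
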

\section{Main results}
\label{Sec:Main}
\subsection{Problem statement}

Consider a multi-agent system consisting of $n$ agents with first-order dynamics given by
\begin{equation}\label{Eq:AgentDyn}
\dot x_i(t)=u_i(t)+d_i(t), \qquad i=1,\ldots,n,
\end{equation}
where $x_i\in \mathbb{R}$ and $u_i\in \mathbb{R}$ are the system state and control input respectively. Term $d_i$ represents external perturbations and is assumed to be bounded by a known positive constant $L_i$, i.e.
\begin{equation}\label{Eq:perturbation}
|d_i(t)| \leq L_i
\end{equation}
The communication topology between agents is represented by the graph (which could be dynamic) $\mathcal{X}_{\sigma(t)}=\langle\mathcal{F},\sigma\rangle$ where $\mathcal{F}=\{\mathcal{X}_1,\ldots,\mathcal{X}_m\}$ is a collection of graphs having the same vertex set and $\sigma:[0,\infty)\rightarrow \{1,\ldots m\}$ is the switching signal determining the topology of the dynamic network at each instant of time. 

The control objective is to design decentralized consensus protocols $u_i$ ($i=1,\ldots,n$), based on available local information, such that the state of all the agents converges to an equilibrium, known as a consensus state $x^*$, in a predefined-time $T_c$, regardless of the initial conditions $x_i(0)$ of the agents and the disturbance signals $d_i(t)$ affecting each agent, i.e.
\begin{equation}\label{Eq:control_objective}
x_1(t)=\cdots=x_n(t)=x^*, \qquad \forall t \geq T_c
\end{equation}

\begin{remark}
If $x^*=\frac{1}{n}\sum_{i=1}^{n}x_i(0)$, the control objective is called as the predefined-time average consensus problem. 
\end{remark}

\begin{remark}
There exist methods for predefined-time consensus in the literature. However, they are based on time-varying gains~\cite{Yong2012,Liu2018,Wang2017,Wang2018,Colunga2018b,Zhao2018}, since the same gain must be applied to all agents, a common time-reference is needed along the network. Moreover, this time-varying gain becomes singular at the predefined-time, see for instance~\cite{Yong2012} or as in \cite{Liu2018,Zhao2018} where it may produce a Zeno behavior (infinite switching in a finite interval). These drawbacks are not present in our approach.
\end{remark}

\subsection{Robust predefined-time consensus algorithms}

In this section, we propose and analyze two fixed-time consensus protocols, which have the following form 
\begin{equation}
\label{Eq:ProtocolA}
u_i = \kappa_i\left[(\alpha|e_{i}|^{p} + \beta|e_{i}|^{q})^k+\zeta\right] \sign{e_{i}}, \ \ e_{i}=\sum_{j\in\mathcal{N}_i(\mathcal{X}_{\sigma(t)})} a_{ij} (x_j(t)-x_i(t))
\end{equation}
and
\begin{equation}
\label{Eq:ProtocolB}
u_i = \kappa_i \sum_{j\in\mathcal{N}_i(\mathcal{X}_{\sigma(t)})} \sqrt{a_{ij}}\left[(\alpha|e_{ij}|^{p} + \beta|e_{ij}|^{q})^k +\zeta\right]\sign{e_{ij}}, \ \ e_{ij}=\sqrt{a_{ij}}(x_j(t)-x_i(t))
\end{equation}
where $\alpha,\beta,p,q,k>0$, $kp<1$ and $kq>1$. The constant $\zeta$ and $\kappa_i$ will be designed later to guarantee the convergence in a predefined-time $T_c$, even in the presence of disturbances. 

\begin{remark}
Let $\phi(\cdot)=\left[(\alpha|\cdot|^{p} + \beta|\cdot|^{q})^k+\zeta\right] \sign{\cdot}$. On the one hand, notice that the consensus protocol~\eqref{Eq:ProtocolA} is computationally simpler than consensus protocol~\eqref{Eq:ProtocolB} since at each time instance each agent only requires a single computation of the nonlinear function $\phi(\cdot)$ whereas for protocol ~\eqref{Eq:ProtocolB} an agent requires to perform one computation of $\phi(\cdot)$ for each neighbor. On the other hand, we will show that~\eqref{Eq:ProtocolA} is a predefined-time consensus protocol for static networks and a fixed-time consensus protocol for dynamic networks 
whereas control~\eqref{Eq:ProtocolB} is a predefined-time consensus protocol for static and dynamic networks
\end{remark}

\subsubsection{Convergence analysis under consensus protocol (\ref{Eq:ProtocolA})}


Let $e=[e_1 \ \cdots \ e_n]^T$, where $e_i$ is given in~\eqref{Eq:ProtocolA}, then $e$ can be written as $e=\mathcal{Q}(\mathcal{X}_{\sigma(t)})x$ with $x=[x_1 \ \cdots \ x_n]^T$. Notice that, the dynamic of the network under the consensus algorithm \eqref{Eq:ProtocolA} is given by
\begin{equation}
\label{ConsensusDynamicA}
\dot{x}=-\Phi(\mathcal{Q}(\mathcal{X}_{\sigma(t)})x)+\Delta(t),
\end{equation}
where, for $z=[z_1 \ \cdots \ z_n]^T\in\mathbb{R}^n$, the function $\Phi:\mathbb{R}^n\rightarrow \mathbb{R}^n$ is defined as
\begin{equation}
\label{Eq:Fp}    
\Phi(z)=
\left[
\begin{array}{c}
\kappa_1\left[(\alpha|z_1|^{p} + \beta|z_1|^{q})^k+\zeta\right] \sign{z_1} \\ \vdots \\ \kappa_n\left[(\alpha|z_n|^{p} + \beta|z_n|^{q})^k+\zeta\right] \sign{z_n}
\end{array}
\right],
\end{equation}
and $\Delta(t)=[d_1(t) \ \cdots \ d_n(t)]^T$ with $\|\Delta(t)\|<L$.

Let us now study the stability of the closed-loop system (\ref{ConsensusDynamicA}). First, using the homogeneity theory, let us derive sufficient conditions for the design of control~\eqref{Eq:ProtocolA} such that the consensus is achieved in a fixed-time under switching topologies.

\begin{lemma}
\label{Lemma:Homogeneous}
The vector field of \eqref{ConsensusDynamicA} is homogeneous in the $0-$limit with degree $d_0=-1<0$ and homogeneous in the $+\infty$-limit with degree $d_\infty=qk-1>0$ with respect to the standard dilation.
\end{lemma}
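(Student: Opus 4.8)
The plan is to apply Definition~\ref{Def:Bilimit} directly to the autonomous part $f(x)=-\Phi(\mathcal{Q}x)$ of the vector field in~\eqref{ConsensusDynamicA} (the time-varying disturbance $\Delta(t)$ plays no role in the spatial homogeneity, and the overall minus sign does not affect the homogeneity degree). The crucial structural fact is that the Laplacian $\mathcal{Q}=\mathcal{Q}(\mathcal{X}_{\sigma(t)})$ acts \emph{linearly} on the state, so the standard dilation passes through it as $\mathcal{Q}(\lambda x)=\lambda\,\mathcal{Q}x$; because this holds for the Laplacian of each individual graph in the family $\mathcal{F}$, the conclusion is independent of $\sigma(t)$. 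Thus it suffices to analyze $\Phi$ evaluated at $\lambda\,\mathcal{Q}x$ component-wise, using $\sign(\lambda z_i)=\sign(z_i)$ and $|\lambda z_i|^r=\lambda^r|z_i|^r$ for $\lambda>0$. I will also use that $kp<1<kq$ forces $p<q$, which selects the dominant monomial inside the bracket in each limit.

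For the $0$-limit I would set $\lambda_0=0$ and $d_0=-1$, so the normalizing exponent $-(d_0+1)$ equals $0$ and $f_0(x)=\lim_{\lambda\to0^+}f(\lambda x)$. Writing $z=\mathcal{Q}x$, the $i$-th component of $f(\lambda x)$ is $-\kappa_i\big[(\alpha\lambda^{p}|z_i|^{p}+\beta\lambda^{q}|z_i|^{q})^{k}+\zeta\big]\sign(z_i)$; since $p,q>0$ both powers $\lambda^{p},\lambda^{q}\to0$, the bracketed polynomial tends to $0$, and (because $k>0$) so does its $k$-th power, leaving $f_0(x)_i=-\kappa_i\zeta\,\sign((\mathcal{Q}x)_i)$. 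The remaining step is to verify that this limiting field is homogeneous of degree $d_0=-1$ in the sense of Definition~\ref{Def.Homogeneous}: replacing $x$ by $\mu x$ and using $\sign(\mu(\mathcal{Q}x)_i)=\sign((\mathcal{Q}x)_i)$ gives $f_0(\mu x)=f_0(x)=\mu^{0}f_0(x)=\mu^{d_0+1}f_0(x)$, as required.

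For the $+\infty$-limit I would set $\lambda_0=+\infty$ and $d_\infty=qk-1$, so the normalizing exponent is $-(d_\infty+1)=-qk$ and $f_\infty(x)=\lim_{\lambda\to\infty}\lambda^{-qk}f(\lambda x)$. Factoring $\lambda^{q}$ out of the bracket, the $i$-th component becomes $-\kappa_i\lambda^{-qk}\big[\lambda^{qk}(\alpha\lambda^{p-q}|z_i|^{p}+\beta|z_i|^{q})^{k}+\zeta\big]\sign(z_i)$; since $p-q<0$ the factor $\lambda^{p-q}\to0$ and the constant contribution obeys $\lambda^{-qk}\zeta\to0$, so $f_\infty(x)_i=-\kappa_i\beta^{k}|(\mathcal{Q}x)_i|^{qk}\sign((\mathcal{Q}x)_i)$. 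Checking the degree, $x\mapsto\mu x$ scales this field by $\mu^{qk}=\mu^{d_\infty+1}$, confirming homogeneity of degree $d_\infty=qk-1$.

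The only genuinely delicate point is bookkeeping with the vector-field homogeneity convention $f(\lambda x)=\lambda^{d+1}f(x)$ of Definition~\ref{Def.Homogeneous}, so that the degree-$0$ scaling of $\sign$ corresponds to a vector-field degree $-1$ rather than $0$, together with correctly identifying which monomial dominates: the discontinuous $\zeta\,\sign$ term governs the field near the origin and yields $d_0=-1$, whereas the highest power $\beta|\cdot|^{q}$ inside the bracket governs the field at infinity and yields $d_\infty=qk-1$. Once these dominant terms are isolated, the existence of both limits and their homogeneity are immediate, and the lemma follows.
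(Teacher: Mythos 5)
Your proof is correct and follows essentially the same route as the paper, whose entire proof is the assertion that the lemma ``follows straightforwardly from the definition of homogeneity in the bi-limit''; you simply carry out the definitional computation that the paper leaves implicit, using the linearity of $\mathcal{Q}$, the scaling $\sign(\lambda z)=\sign(z)$, $|\lambda z|^{r}=\lambda^{r}|z|^{r}$, and $p<q$ (forced by $kp<1<kq$) to identify the dominant terms. The limit fields you obtain, $f_0(x)_i=-\kappa_i\zeta\sign{((\mathcal{Q}x)_i)}$ and $f_\infty(x)_i=-\kappa_i\beta^{k}|(\mathcal{Q}x)_i|^{qk}\sign{((\mathcal{Q}x)_i)}$, coincide exactly with the expressions $-\Phi_0$ and $-\Phi_\infty$ displayed in the paper's proof of Theorem~\ref{Th.ConsHom}, confirming your bookkeeping of the degrees $d_0=-1$ and $d_\infty=qk-1$.
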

\begin{proof}
It follows straightforwardly from the definition of homogeneity in the bi-limit \cite{Andrieu2008}.
\end{proof}

\begin{theorem}
\label{Th.ConsHom}
Let $\mathcal{F}=\{\mathcal{X}_1,\ldots,\mathcal{X}_m\}$ be a collection of connected graphs and let $\sigma(t):[0,\infty]\to \{1,\ldots,m\}$ be a non-Zeno switching signal.\\
If 
\begin{equation}
\kappa\zeta\geq L \text{ where } \kappa = \min_{l\in\{1,\ldots,n\}}{\kappa_l}
\end{equation} 
then, protocol~\eqref{Eq:ProtocolA} guarantees the fixed-time consensus on switched dynamic network $\mathcal{X}_{\sigma(t)}$ under arbitrary switching signals $\sigma(t)$.
\end{theorem}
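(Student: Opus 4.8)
The plan is to reduce the consensus problem to a stability problem for the disagreement among the agents, to dispatch the disturbance using the robustness gain $\zeta$, and then to invoke the bi-limit homogeneity of Lemma~\ref{Lemma:Homogeneous} to upgrade asymptotic stability under arbitrary switching into fixed-time convergence. First I would introduce the disagreement coordinate $\delta = x - \bar{x}\mathbf{1}$ (with $\bar{x}=\tfrac{1}{n}\mathbf{1}^T x$), or equivalently track the spread $V(x)=\max_i x_i - \min_i x_i$, so that reaching consensus is equivalent to driving $V$ to zero. Since every $\mathcal{X}_l\in\mathcal{F}$ is connected and the vector field of~\eqref{ConsensusDynamicA} shares the \emph{same} homogeneity degrees for all $l$, the spread $V$ is the natural candidate for a \emph{common} Lyapunov function across all modes, which is what makes an arbitrary-switching argument possible.

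The first real step is disturbance rejection. Consider the agent attaining the maximum value $x_{\max}$; in a connected graph it has at least one neighbor with a strictly smaller value whenever $V>0$, so $e_{\max}\le 0$ and the control contributes a term of magnitude at least $\kappa_{\max}\zeta \ge \kappa\zeta \ge L \ge |d_{\max}|$ that pushes $x_{\max}$ downward. Hence $\dot{x}_{\max}\le 0$, and symmetrically $\dot{x}_{\min}\ge 0$, so the Dini derivative satisfies $\mathcal{D}^{+}V\le 0$ for every mode and every admissible disturbance. This shows that, along trajectories, the $\zeta$-term dominates $\Delta(t)$ and the perturbed switched system is bounded above, for the purposes of the Lyapunov decrease, by the \emph{nominal} switched homogeneous system $\dot{x}=-\Phi(\mathcal{Q}(\mathcal{X}_{\sigma(t)})x)$; the non-smoothness of $V$ at switching instants is harmless because $V$ is continuous and $\sigma$ is non-Zeno.

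Next I would establish global asymptotic stability of the consensus manifold under arbitrary switching for the nominal system and, \emph{separately}, for its $0$-limit and $+\infty$-limit approximations $f_0$ and $f_\infty$ of~\eqref{Eq:Hom}. Because $f_0$ and $f_\infty$ are themselves consensus protocols of the same sign-gradient structure (with the single dominant power $|e|^{pk}$, respectively $|e|^{qk}$), the identical spread argument yields $\mathcal{D}^{+}V<0$ whenever $V>0$ for each of the three fields and every connected graph, hence GAS under arbitrary switching. With the same homogeneity degrees holding simultaneously for all graphs in $\mathcal{F}$ (Lemma~\ref{Lemma:Homogeneous}), the switched counterpart of the bi-limit characterization in Theorem~\ref{Th:Fixed} applies: the negative $0$-limit degree $d_0=-1$ forces finite-time contraction near consensus, the positive $+\infty$-limit degree $d_\infty=qk-1$ forces arbitrarily large disagreements into a bounded neighborhood in a time independent of the initial spread, and the two behaviors combine into a settling-time bound uniform in both $x(0)$ and $\sigma$.

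The main obstacle is precisely this switched-systems upgrade: Theorem~\ref{Th:Fixed} is stated for a single autonomous vector field, so I must argue that homogeneity in the bi-limit together with GAS under arbitrary switching still delivers a \emph{uniform} fixed-time bound. The cleanest route is to exhibit a common homogeneous-in-the-bi-limit Lyapunov function for the family $\{-\Phi(\mathcal{Q}_l\,\cdot)\}_{l=1}^m$ whose Dini derivative is bounded by $-c_1 V^{a}-c_2 V^{b}$ with $a<1<b$ uniformly over $l$, the exponents $a,b$ being fixed by $d_0$ and $d_\infty$; the spread (or a smooth homogeneous surrogate built from $\delta$) is the candidate. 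Verifying that such an estimate holds with \emph{graph-independent} constants $c_1,c_2$ — so that the fixed-time estimate does not degrade under switching — is the delicate point, while everything else reduces to the homogeneity bookkeeping already encapsulated in Lemma~\ref{Lemma:Homogeneous} and the disturbance domination guaranteed by $\kappa\zeta\ge L$.
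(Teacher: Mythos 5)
Your skeleton is the same as the paper's: the spread $V(x)=\max_i x_i-\min_i x_i$ as a common nonsmooth Lyapunov function across all connected modes, domination of the disturbance through the $\zeta$-term via $\kappa\zeta\geq L$, asymptotic stability of the nominal field and of its bi-limit approximations, and then Theorem~\ref{Th:Fixed} together with a common-Lyapunov switching argument. There is, however, one concrete error in your homogeneity bookkeeping. With $\zeta>0$, the $0$-limit approximation of~\eqref{ConsensusDynamicA} is \emph{not} a power protocol with dominant term $|e|^{pk}$ — that would give degree $pk-1$ and contradict the $d_0=-1$ of Lemma~\ref{Lemma:Homogeneous}, which you yourself cite. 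Since $d_0=-1$, the prefactor $\lambda^{-(d_0+1)}$ equals $1$, and letting $\lambda\to 0$ annihilates the power terms while preserving both the constant-magnitude term and the disturbance: the paper computes $f_0$ as the pure signum protocol with components $\kappa_i\zeta\sign{e_i}$ \emph{plus} $\Delta(t)$. This is not cosmetic. Because $\Delta(t)$ survives in the $0$-limit system, its asymptotic stability holds only because the surviving control term is $\kappa_i\zeta\sign{e_i}$ with $\kappa\zeta\geq L$; under your description (vanishing power $|e|^{pk}$ near consensus) the $0$-limit system would fail to be asymptotically stable, since a term that vanishes at the consensus manifold cannot reject a bounded disturbance there. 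So the step ``the identical spread argument yields GAS for each of the three fields'' fails as you set it up, though it is repaired by the correct computation of $\Phi_0$ (and your $f_\infty=\kappa_i\beta^k|e_i|^{qk}\sign{e_i}$, where the disturbance does drop out, is right).

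Two smaller points. First, your pointwise claim $\dot x_{\max}\leq 0$ is too strong: if every neighbor of the maximizing agent also attains the maximum, then $e_{\max}=0$ at that instant, $\sign{e_{\max}}=0$, and the disturbance is momentarily unopposed. The paper instead argues via connectedness that $e_i$ and $e_j$ cannot both vanish on a nonzero interval (there is always a node $i^*$ with $x_{i^*}=x_{\max}$ but $e_{i^*}<0$), so $\mathcal{D}^{+}V<0$ holds almost everywhere, which suffices. Second, regarding the ``delicate point'' you leave open — upgrading the single-field Theorem~\ref{Th:Fixed} to the switched setting — the paper does not construct your graph-independent constants $c_1,c_2$; it simply observes that~\eqref{Eq:LyapHomo} is a common Lyapunov function for every connected mode and invokes~\cite[Theorem 2.1]{Liberzon2003}, making no quantitative claim on the bound (consistent with Table~\ref{Tab:ComparisonA}, which lists no convergence-time estimate for this case). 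Your proposed uniform estimate $-c_1V^{a}-c_2V^{b}$ would actually deliver more than the theorem asserts — essentially a predefined-time bound as in Theorem~\ref{theorem_control1_fixed}, where the constants necessarily involve $\lambda_2(\mathcal{Q}(\mathcal{X}_l))$ and are therefore graph-dependent — so insisting on graph-independence there is neither needed for the stated claim nor achievable without mode-dependent gains.
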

\begin{proof}
Since, according to Lemma~\ref{Lemma:Homogeneous} the vector field in~\eqref{ConsensusDynamicA} is homogeneous in the bi-limit then, according to Theorem~\ref{Th:Fixed}, to show that fixed-time consensus is achieved it only remains to prove that~\eqref{ConsensusDynamicA} as well as
\begin{equation}
\label{Eq:HominZeroLimit}
\dot{x}=-\Phi_{0}(\mathcal{Q}(\mathcal{X}_{\sigma(t)})x)+\Delta(t)
\end{equation}
and
\begin{equation}
\label{Eq:HomInfLimit}
\dot{x}=-\Phi_{\infty}(\mathcal{Q}(\mathcal{X}_{\sigma(t)})x)
\end{equation}
are asymptotically stable where, with $d_0=-1$ and $d_\infty=qk-1$,
$$
-\Phi_{0}(\mathcal{Q}(\mathcal{X}_{\sigma(t)})x)+\Delta(t)=\lim_{\lambda \to 0} \lambda^{-(d_0+1)}(-\Phi(\mathcal{Q}(\mathcal{X}_{\sigma(t)})\lambda x)+\Delta(t))=\left[
\begin{array}{c}
\kappa_1 \zeta\sign{e_1} \\ \vdots \\ \kappa_n\zeta \sign{e_n}
\end{array}
\right]+\Delta(t)
$$
and 
$$
-\Phi_{\infty}(\mathcal{Q}(\mathcal{X}_{\sigma(t)})x)=\lim_{\lambda \to \infty} \lambda^{-(d_\infty+1)}(-\Phi(\mathcal{Q}(\mathcal{X}_{\sigma(t)})\lambda x)+\Delta(t))=\left[
\begin{array}{c}
\kappa_1\beta^k |e_1|^{qk}\sign{e_1} \\ \vdots \\ \kappa_n\beta^k |e_n|^{qk} \sign{e_n}
\end{array}
\right].
$$
To this aim, consider the (Lipschitz continuous) non-smooth Lyapunov function candidate
\begin{equation}
\label{Eq:LyapHomo}
    V(x)=\max(x_1,\cdots,x_n)-\min(x_1,\cdots,x_n),
\end{equation}
which is differentiable almost everywhere and positive definite. Let $x_i=\max(x_1,\cdots,x_n)$ and $x_j=\min(x_1,\cdots,x_n)$ for a nonzero interval $(\tau,\tau+\Delta_t)$ then the time derivative of~\eqref{Eq:LyapHomo} along the trajectory of~\eqref{ConsensusDynamicA} is given by
\begin{align*}
\label{Eq:DerLyapHomo}
    \mathcal{D}^{+}V(x)&=\kappa_i(\alpha|e_i|^{p} + \beta|e_i|^{q})^k\sign{e_i}-\kappa_j(\alpha|e_j|^{p} + \beta|e_j|^{q})^k\sign{e_j}+\kappa_i\zeta\sign{e_i}-\kappa_j\zeta\sign{e_j}+d_i(t)-d_j(t)\\
    &\leq\kappa_i(\alpha|e_i|^{p} + \beta|e_i|^{q})^k\sign{e_i}-\kappa_j(\alpha|e_j|^{p} + \beta|e_j|^{q})^k\sign{e_j}
\end{align*}
Notice that, since $x_i=\max(x_1,\cdots,x_n)$ and $x_j=\min(x_1,\cdots,x_n)$ then $e_i\leq0$ and $e_j\geq0$, and unless consensus is achieved, they cannot be both zero for a non zero interval, because since the graph is connected there is a path from agent $i$ to agent $j$ such that there is a node $i^*$ satisfying $x_{i}=x_{i^*}$ but $e_{i^*}<0$. Thus if $e_i(t)=0$ then $e_i(t^+)<0$. A similar argument follows to show that there is a node $j^*$ such that $x_{j}=x_{j^*}$ but $e_{j^*}>0$ and thus if $e_j(t)=0$ then $e_j(t^+)>0$. Thus $\mathcal{D}^{+}V(x)<0$ almost everywhere and consensus is achieved.


Fixed-time convergence for dynamic networks, under arbitrary switching, follows from the stability theory of switched systems~\cite[Theorem 2.1]{Liberzon2003} by noticing that~\eqref{Eq:LyapHomo} is a common Lyapunov function for each subsystem evolving with $\mathcal{X}_l$ connected.
\end{proof}

\begin{remark}
Notice that Theorem~\ref{Th.ConsHom} extends the results in~\cite{Gomez-Gutierrez2018}, in which the fixed-time consensus problem was addressed using homogeneity for the case where $k=1$ and no disturbances are present.
\end{remark}

Using an appropriate Lyapunov function, let us derive sufficient conditions for the design of control~\eqref{Eq:ProtocolA} such that the consensus is achieved in a predefined-time under a fixed topology.

\begin{theorem}\label{theorem_control1_fixed}
Let $\mathcal{X}_{l}$ be a connected graph and let 
\begin{equation}
\kappa_i\geq \frac{n\gamma(\rho)}{\lambda_2(\mathcal{Q}(\mathcal{X}_l))T_c}\text{ and } \kappa\zeta\geq L
\end{equation} 
where $$\kappa = \min_{l\in\{1,\ldots,n\}}{\kappa_l} $$ 
and $\gamma(\rho)$ is defined in Eq. (\ref{Eq:MinUpperEstimate}), then,  protocol~\eqref{Eq:ProtocolA} guarantees that consensus is achieved before a predefined-time $T_c$. 
\end{theorem}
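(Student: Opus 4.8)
The plan is to prove predefined-time consensus by exhibiting a scalar disagreement measure that satisfies the Lyapunov differential inequality \eqref{eq:dV_weak} and then invoking Theorem~\ref{thm:weak_pt}. Adopting the closed loop \eqref{ConsensusDynamicA} with $e=\mathcal{Q}(\mathcal{X}_l)x$, consensus is equivalent to $e=0$, i.e. to $x\in\mathrm{span}(\mathbf{1})$. A key design observation guiding the choice of candidate is that the nonlinearity $(\alpha|\cdot|^{p}+\beta|\cdot|^{q})^{k}$ is applied componentwise to $e$, which is \emph{linear} in $x$; hence, for the exponents $p,q$ to reproduce those required by \eqref{eq:dV_weak}, the candidate must be homogeneous of degree one in $x$. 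I would therefore take
\[
V(x)=\sqrt{x^{T}\mathcal{Q}(\mathcal{X}_l)x},
\]
interpreted on the disagreement coordinate $x_\perp=x-\bar{x}\mathbf{1}$ with $\bar{x}=\tfrac{1}{n}\mathbf{1}^{T}x$ (equivalently, on the quotient $\mathbb{R}^{n}/\mathrm{span}(\mathbf{1})$), where $V$ is positive definite, radially unbounded, and vanishes exactly on the consensus subspace. By Lemma~\ref{lemma:Lambda2} it satisfies the spectral bound $V(x)^{2}=x^{T}\mathcal{Q}x\geq\lambda_2(\mathcal{Q})\,\|x_\perp\|_2^{2}$.

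Next I would differentiate along \eqref{ConsensusDynamicA}. Using $x^{T}\mathcal{Q}=e^{T}$ and $e_i\,\sign{e_i}=|e_i|$, one gets
\[
\mathcal{D}^{+}V(x)=\frac{1}{V(x)}\Big(-e^{T}\Phi(e)+e^{T}\Delta(t)\Big),\qquad e^{T}\Phi(e)=\sum_{i=1}^{n}\kappa_i|e_i|\big[(\alpha|e_i|^{p}+\beta|e_i|^{q})^{k}+\zeta\big].
\]
The disturbance is then absorbed by the gain condition: since $\kappa_i\zeta\geq\kappa\zeta\geq L\geq L_i$, the cross term $e^{T}\Delta(t)\leq\sum_i|e_i|L_i$ is dominated by $\sum_i\kappa_i\zeta|e_i|$, leaving
\[
\mathcal{D}^{+}V(x)\leq-\frac{\kappa}{V(x)}\sum_{i=1}^{n}|e_i|\big(\alpha|e_i|^{p}+\beta|e_i|^{q}\big)^{k},\qquad \kappa=\min_{i}\kappa_i.
\]

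The crux, and the step I expect to be the main obstacle, is the \emph{aggregation}: bounding this vector-valued right-hand side from below by the single scalar comparison term $(\alpha V^{p}+\beta V^{q})^{k}$ so that \eqref{eq:dV_weak} is matched. Writing $F(s)=s(\alpha s^{p}+\beta s^{q})^{k}$, the target inequality is $\sum_i F(|e_i|)\geq\tfrac{\lambda_2}{n}\,F(V)$, which I would assemble from three ingredients: (i) the connectivity estimate $\|e\|_2^{2}=x^{T}\mathcal{Q}^{2}x\geq\lambda_2\,x^{T}\mathcal{Q}x=\lambda_2V^{2}$; (ii) the norm equivalence $\|e\|_\infty\geq n^{-1/2}\|e\|_2$, which is where the factor $n$ enters; and (iii) convexity of $F$ with $F(0)=0$, to descend from $\sum_iF(|e_i|)$ to a single term $F(\|e\|_\infty)$ and relate it back to $F(V)$. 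The delicate point is that $F$ changes its local degree across $s$ — from $1+kp<2$ in the $0$-limit to $1+kq>2$ in the $\infty$-limit — so no single homogeneity exponent can be factored out; the two regimes $\alpha|e_i|^{p}$ and $\beta|e_i|^{q}$ must be tracked separately and their constants reconciled against the hypothesis $\kappa\geq\tfrac{n\gamma(\rho)}{\lambda_2(\mathcal{Q})T_c}$, with $\gamma(\rho)$ as in \eqref{Eq:MinUpperEstimate}.

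Once the aggregation bound is established, substituting $\kappa\geq\tfrac{n\gamma(\rho)}{\lambda_2 T_c}$ collapses the estimate to
\[
\mathcal{D}^{+}V(x)\leq-\frac{\gamma(\rho)}{T_c}\big(\alpha V(x)^{p}+\beta V(x)^{q}\big)^{k},
\]
which is exactly the hypothesis \eqref{eq:dV_weak} of Theorem~\ref{thm:weak_pt}. That theorem then gives $V(x(t))=0$ for all $t\geq T_c$, i.e. $e(t)=0$ and hence $x_1(t)=\cdots=x_n(t)$ for $t\geq T_c$, establishing consensus before the predefined time $T_c$ for every initial condition and every admissible disturbance satisfying \eqref{Eq:perturbation}.
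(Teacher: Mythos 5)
Your overall architecture (closed loop \eqref{ConsensusDynamicA}, a quadratic-form disagreement measure, absorption of the disturbance by the $\zeta$-term via $\kappa_i\zeta\geq\kappa\zeta\geq L\geq L_i$, and an appeal to Theorem~\ref{thm:weak_pt}) is the same as the paper's, and your componentwise disturbance absorption is fine. But there is a genuine gap precisely at the step you yourself flag as ``the main obstacle'': the aggregation inequality is never established, and in the form you state it --- $\sum_i F(|e_i|)\geq \tfrac{\lambda_2}{n}F(V)$ with the \emph{unnormalized} $V=\sqrt{x^T\mathcal{Q}x}$ and $F(s)=s(\alpha s^p+\beta s^q)^k$ --- it cannot be derived from your ingredients (i)--(iii), and the resulting differential inequality actually fails for large $V$. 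The reason is the one you identify but do not resolve: $F$ has no single homogeneity degree, so the constants $\sqrt{\lambda_2}$, $n^{-1/2}$, $n^{-1}$ produced by the spectral and norm bounds cannot be pulled out of $F$. Concretely, Jensen gives $\sum_i F(|e_i|)\geq nF(\|e\|_1/n)\geq nF(\theta V)$ with $\theta=\sqrt{\lambda_2}/n$; in the regime where $\beta|e_i|^q$ dominates, $nF(\theta V)\sim n\,\theta^{1+qk}\beta^k V^{1+qk}$, and demanding this exceed $\tfrac{\lambda_2}{n}\beta^k V^{1+qk}$ forces $\sqrt{\lambda_2}\geq n$, false for every connected graph. (Your alternative route through $\|e\|_\infty\geq n^{-1/2}\|e\|_2$ hits the identical constant-through-$F$ problem, and does not even need convexity, since $\sum_i F(|e_i|)\geq F(\|e\|_\infty)$ trivially.) Hence with your choice of $V$ the hypothesis \eqref{eq:dV_weak} is not met, and no tuning of $\kappa\geq n\gamma(\rho)/(\lambda_2 T_c)$ repairs it, because the defect grows unboundedly with the state.

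The paper's fix is a normalization of the Lyapunov function rather than any further estimate on $F$: it takes $V=\frac{1}{n}\sqrt{\lambda_2(\mathcal{Q}(\mathcal{X}_l))\,\delta^T\mathcal{Q}(\mathcal{X}_l)\delta}$ as in \eqref{Eq:LyapunovProt1}. Then, with $v=\mathcal{Q}\delta$, Lemma~\ref{Th:Norm} and $\delta^T\mathcal{Q}^2\delta\geq\lambda_2\,\delta^T\mathcal{Q}\delta$ give $\frac{1}{n}\|v\|_1\geq\frac{1}{n}\|v\|_2\geq V$, so Jensen's inequality for the convex $F$ (Lemmas~\ref{Th:convex_fundamental} and~\ref{Th:convex}) followed by monotonicity of $F$ (Lemma~\ref{Th:monotone}) yields $\frac{1}{n}\sum_i F(|v_i|)\geq F\bigl(\frac{1}{n}\|v\|_1\bigr)\geq F(V)$: the argument of $F$ is bounded below by $V$ itself, no constant ever passes through $F$, and the exponents $p,q,k$ land exactly on $V$. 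Dividing by $\sqrt{\delta^T\mathcal{Q}\delta}=nV/\sqrt{\lambda_2}$ then gives $\mathcal{D}^{+}V\leq-\frac{\kappa\lambda_2}{n}(\alpha V^p+\beta V^q)^k\leq-\frac{\gamma(\rho)}{T_c}(\alpha V^p+\beta V^q)^k$, which is exactly \eqref{eq:dV_weak}, and Theorem~\ref{thm:weak_pt} concludes. You had all the right lemmas on the table; the missing idea is to scale $V$ so that the Jensen mean $\frac{1}{n}\|v\|_1$ dominates $V$ directly.
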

\begin{proof}
Let $\delta = [\delta_1, \dots , \delta_n]^T$ be a disagreement variable such that $x = \alpha\mathbf{1} + \delta$ where $\alpha$ is a consensus value, which is unknown but constant. Consider the Lyapunov function candidate
\begin{equation}
V(\delta) = \frac{1}{n} \sqrt{\lambda_2(\mathcal{Q}(\mathcal{X}_l))\delta^T  \mathcal{Q}(\mathcal{X}_l)\delta},
\label{Eq:LyapunovProt1}
\end{equation}
By noticing that $\dot{\delta}=\dot{x}$, then it follows that
\begin{equation*}
\mathcal{D}^{+}V(\delta) = \frac{\sqrt{\lambda_2(\mathcal{Q}(\mathcal{X}_l))}}{n\sqrt{\delta^T\mathcal{Q}(\mathcal{X}_l)\delta}}\delta^T\mathcal{Q}(\mathcal{X}_l)\dot{\delta}  = \frac{\sqrt{\lambda_2(\mathcal{Q}(\mathcal{X}_l))}}{n\sqrt{\delta^T\mathcal{Q}(\mathcal{X}_l)\delta}}\delta^T\mathcal{Q}(\mathcal{X}_l)(-\Phi(\mathcal{Q}(\mathcal{X}_l)\delta)+\Delta(t)),
\end{equation*}
Let $v = \mathcal{Q}(\mathcal{X}_l)\delta = (v_1,\dots,v_n)^T$, therefore:
\begin{align}
\mathcal{D}^{+}V(\delta) =& \frac{\sqrt{\lambda_2(\mathcal{Q}(\mathcal{X}_l))}}{n}\left(-\frac{1}{\sqrt{\delta^T\mathcal{Q}(\mathcal{X}_l)\delta}} v^T\Phi(v) + \frac{\delta^T\mathcal{Q}(\mathcal{X}_l)}
{\sqrt{\delta^T\mathcal{Q}(\mathcal{X}_l)\delta}}\Delta(t)\right) \nonumber\\
  & = \frac{\sqrt{\lambda_2(\mathcal{Q}(\mathcal{X}_l))}}{n} \left(-\frac{1}{\sqrt{\delta^T\mathcal{Q}(\mathcal{X}_l)\delta}}\sum_{i=1}^n\kappa_i|v_i|(\alpha |v_i|^p + \beta|v_i|^q)^k - \frac{\zeta}{\sqrt{\delta^T\mathcal{Q}(\mathcal{X}_l)\delta}}\sum_{i=1}^n \kappa_i|v_i|  + \frac{\delta^T\mathcal{Q}(\mathcal{X}_l)}
{\sqrt{v^T\delta}}\Delta(t)\right)\label{prot2_firsteq}
\end{align}
Using Lemma \ref{Th:convex}, the first term can be rewritten as:
\begin{align*}
\frac{n}{\sqrt{\delta^T\mathcal{Q}(\mathcal{X}_l)\delta}}\sum_{i=1}^n\frac{1}{n}\kappa_i|v_i|(\alpha |v_i|^p + \beta|v_i|^q)^k &\geq \frac{ \kappa n}{\sqrt{\delta^T\mathcal{Q}(\mathcal{X}_l)\delta}}\sum_{i=1}^n\frac{1}{n}|v_i|(\alpha |v_i|^p + \beta|v_i|^q)^k\\ &\geq\frac{ \kappa n}{\sqrt{\delta^T\mathcal{Q}(\mathcal{X}_l)\delta}}\left(\frac{1}{n}\|v\|_1\right)\left(\alpha \left(\frac{1}{n}\|v\|_1\right)^p + \beta\left(\frac{1}{n}\|v\|_1\right)^q\right)^k
\end{align*}
where $\kappa = \min\{\kappa_1,\dots,\kappa_n\}$ and $\|v\|_1 = \sum_{i=1}^n |v_i|$
Moreover, using Lemma \ref{Th:Norm}, one can obtain
\begin{equation*}
\|v\|_1 \geq \|v\|_2 = v^Tv = \sqrt{\delta^T \mathcal{Q}(\mathcal{X}_l)^2\delta} 
\end{equation*}
Expressing the disagreement variable $\delta$ as a linear combination of the eigenvectors of $\mathcal{Q}(\mathcal{X}_l)$, 
the term $\delta^T \mathcal{Q}(\mathcal{X}_l)^2 \delta$ can be bounded as
\begin{equation*}
 \delta^T \mathcal{Q}(\mathcal{X}_l)^2 \delta 
 \geq \lambda_2(\mathcal{Q}(\mathcal{X}_l))\delta^T\mathcal{Q}(\mathcal{X}_l)\delta
\end{equation*}
Therefore, by Lemma \ref{Th:monotone}:
\begin{align}
\label{prot2_res1}
\frac{n}{\sqrt{\delta^T\mathcal{Q}(\mathcal{X}_l)\delta}}\sum_{i=1}^n\frac{1}{n}\kappa_i|v_i|(\alpha |v_i|^p + \beta|v_i|^q)^k &\geq \frac{\kappa n}{\sqrt{\delta^T\mathcal{Q}(\mathcal{X}_l)\delta}}V(\alpha V^p + \beta V^q)^k = \kappa\sqrt{\lambda_2(\mathcal{Q}(\mathcal{X}_l))} (\alpha V^p + \beta V^q)^k
\end{align}
Furthermore, from the last two terms of \eqref{prot2_firsteq} the following is obtained:
\begin{align}
- \frac{\zeta}{\sqrt{\delta^T\mathcal{Q}(\mathcal{X}_l)\delta}}\sum_{i=1}^n \kappa_i|v_i|  + \frac{v^T}
{\sqrt{\delta^T\mathcal{Q}(\mathcal{X}_l)\delta}}\Delta(t) &\leq \lambda_2(\mathcal{Q}(\mathcal{X}_l))\left(-\frac{\zeta}{\|v\|}\sum_{i=1}^n \kappa_i|v_i|  + \frac{v^T}
{\|v\|}\Delta(t)\right)\nonumber \\
& \leq\lambda_2(\mathcal{Q}(\mathcal{X}_l))(-\zeta\kappa + L) \leq 0 
\label{prot2_res2}
\end{align}
Therefore, the following inequality is obtained from \eqref{prot2_firsteq}, by combining \eqref{prot2_res1} and \eqref{prot2_res2}:
\begin{equation}
\mathcal{D}^{+}V(\delta)  \leq -\frac{\kappa\lambda_2(\mathcal{Q}(\mathcal{X}_l))}{n}(\alpha V^p + \beta V^q)^k \leq -\frac{\gamma(\rho)}{T_c}(\alpha V^p + \beta V^q)^k
\end{equation}
Then, according to Theorem \ref{thm:weak_pt}, protocol~\eqref{Eq:ProtocolA} guarantees that consensus is achieved before a predefined-time $T_c$.
\end{proof}

\begin{example}
\label{Example_fdd_static}
Consider the multi-agent system (\ref{Eq:AgentDyn}) composed of $n=10$ agents with external perturbation $d_i(t) = \sin(40t + 0.1i)$. The communication topology, given in Figure~\ref{Fig:net0} is undirected and static. The corresponding algebraic connectivity is $\lambda_2(\mathcal{Q}(\mathcal{X}))=0.27935$. The initial conditions of the agents are randomly generated and are as follows:
$$x(0)=[134.51,   40.72 , 214.15,   40.04, -241.50, -189.57,  181.35,   -7.8517,  172.42, -145.29]^T.$$ 
According to Theorem \ref{theorem_control1_fixed}, protocol~\eqref{Eq:ProtocolA} with $T_c=1$, $p = 1.5, q = 3.0, k=0.5, \alpha = 1, \beta = 2, \kappa = 178.88, \zeta = 0.0177$ guarantees that the consensus is achieved before a predefined-time $T_c$ under the graph topology $\mathcal{X}$. Figure~\ref{Fig:FDD_static} shows the corresponding result. For this experiment the settling time is of 0.095s. 

\begin{figure}[t]
\begin{center}         
\includegraphics[width=6cm]{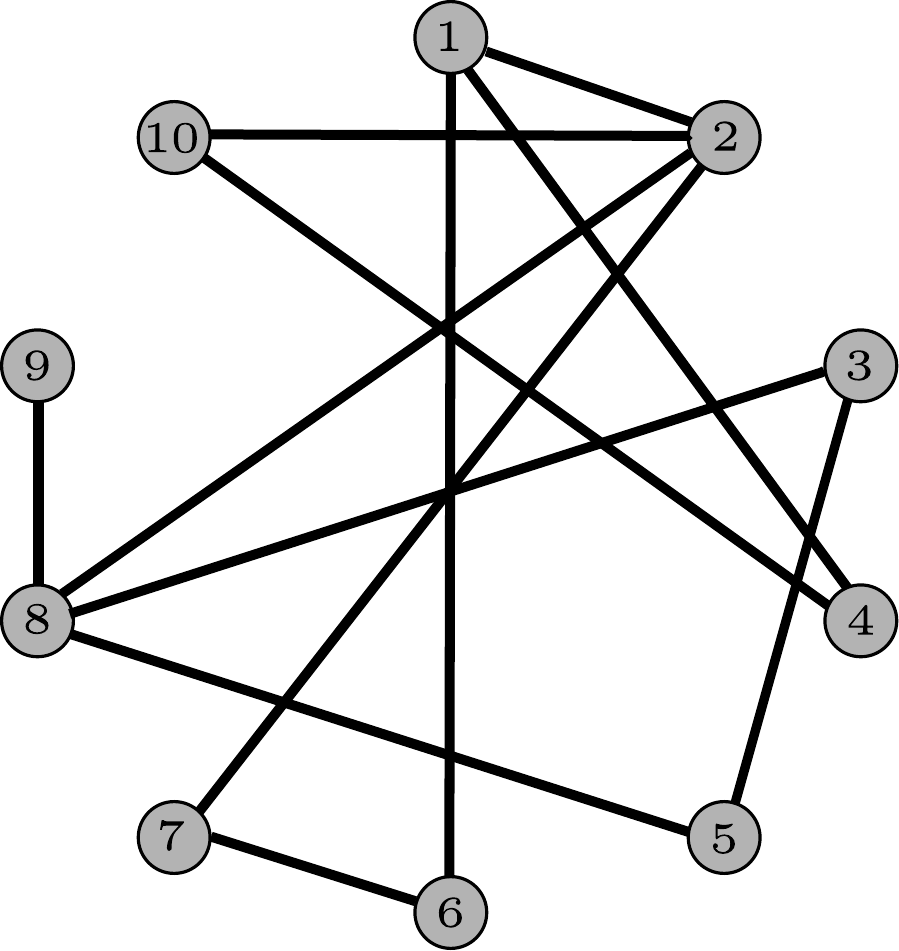}
\end{center}
\caption{Static network $\mathcal{X}$ used for Example \ref{Example_fdd_static}.}\label{Fig:net0}
\end{figure}

\graphicspath{{Simulations/}}
\begin{figure}[t]
\begin{center}
 \def\svgwidth{15cm}
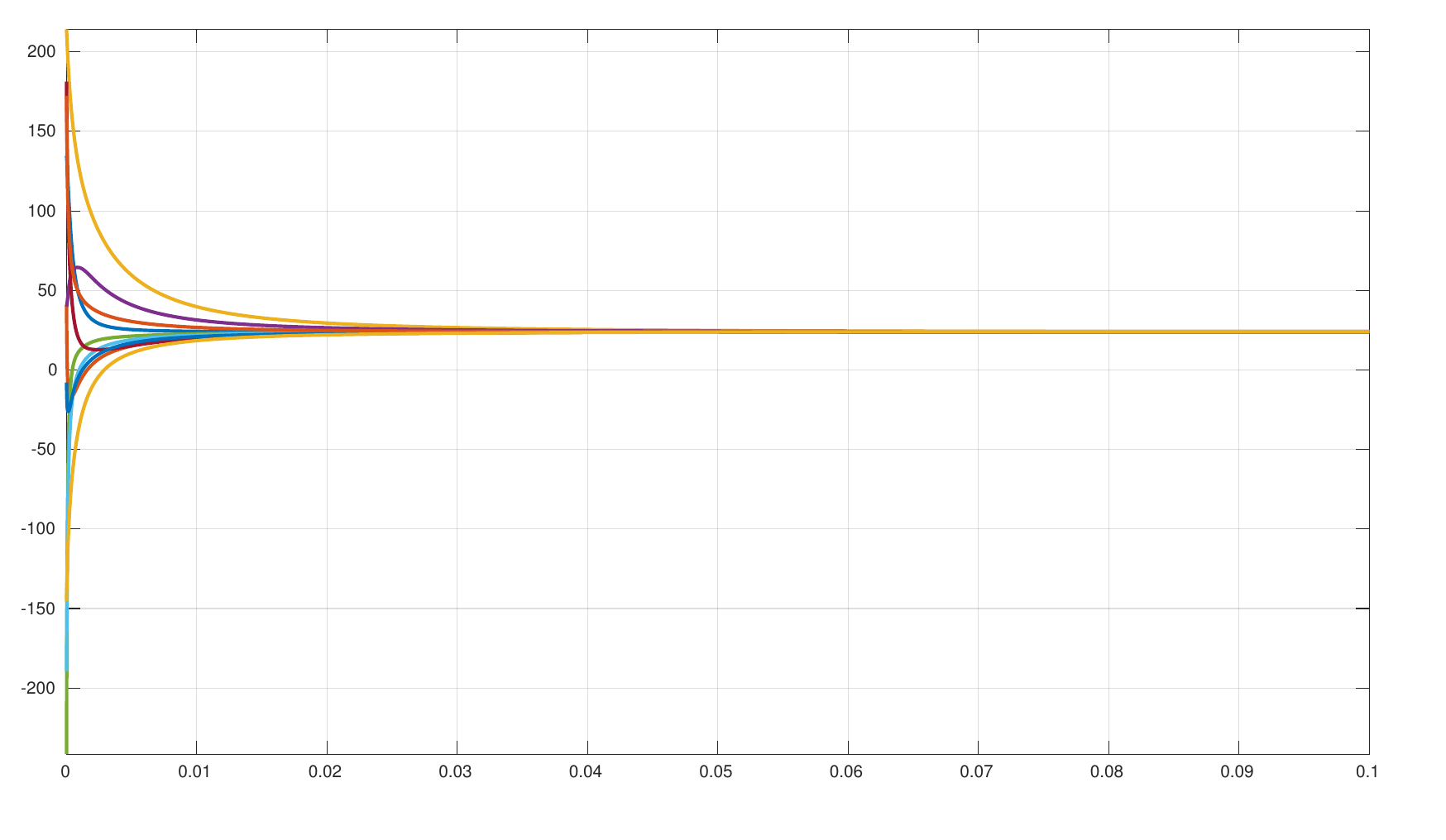
\end{center}
\caption{ Convergence of the consensus algorithm for Example \ref{Example_fdd_static}.}
\label{Fig:FDD_static}
\end{figure}
\end{example}

\begin{example}
\label{Example_fdd}
Consider the multi-agent system (\ref{Eq:AgentDyn}) composed of $n=10$ agents with external perturbation $d_i(t) = \sin(40t + 0.1i)$. The collection of communication topologies $\mathcal{F}=\{\mathcal{X}_1,\ldots,\mathcal{X}_4\}$, given in Figure~\ref{fig:net1}-\ref{fig:net4} are undirected. The switched dynamic network evolves according to the switching signal $\sigma(t)$ given in Figure~\ref{Fig:FDD_plot} which satisfies the minimum dwell time condition. The corresponding algebraic connectivity is $$\lambda_2(\mathcal{Q}(\mathcal{X}_l)) \in \{0.16548 ,  0.73648 ,  0.15776 ,  0.57104\}.$$ The initial conditions of the agents are randomly generated and are as follows:
$$x(0)=[-210.02,  117.66,  161.32,  -78.30, -181.93,   82.97,  165.22,   86.81, -180.27,  -60.58
]^T.$$
According to Theorem \ref{Th.ConsHom}, protocol~\eqref{Eq:ProtocolA} with $p = 1.5$, $q = 3.0$, $k=0.5$, $\alpha = 1$, $\beta = 2$, $$\{\kappa_1,\dots,\kappa_4\}=\{301.9585,   67.8472,  316.7348,   87.5037\},$$ $\kappa = 67.8472$ and $\zeta=0.0466$ guarantees that the consensus is achieved in a fixed-time under the switched dynamic network $\mathcal{X}_{\sigma(t)}$. Figure~\ref{Fig:FDD_plot} shows the corresponding result. 

\begin{figure}
\begin{center}  
\begin{subfigure}[b]{.3\linewidth}
\includegraphics[width=\linewidth]{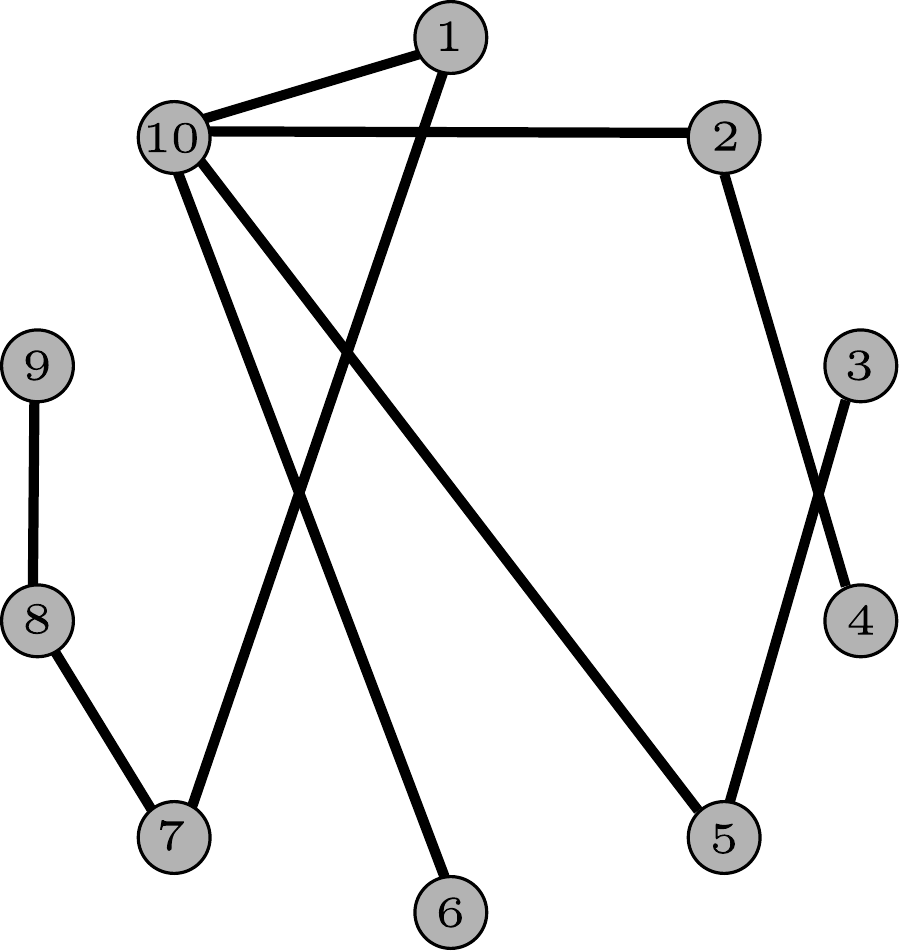}
\caption{$\mathcal{X}_1$}\label{fig:net1}
\end{subfigure}
\hspace{0.02\textwidth}
\begin{subfigure}[b]{.3\linewidth}
\includegraphics[width=\linewidth]{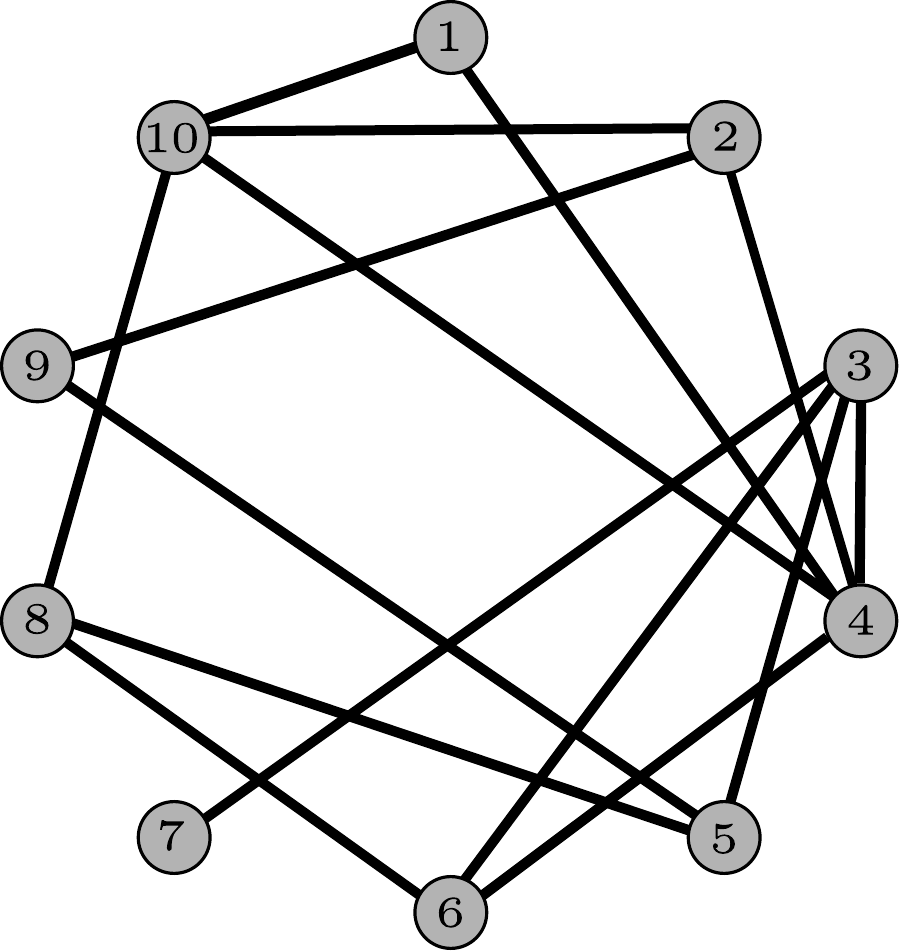}
\caption{$\mathcal{X}_2$}\label{fig:net2}
\end{subfigure}
\end{center}

\begin{center} 
\begin{subfigure}[b]{.3\linewidth}
\includegraphics[width=\linewidth]{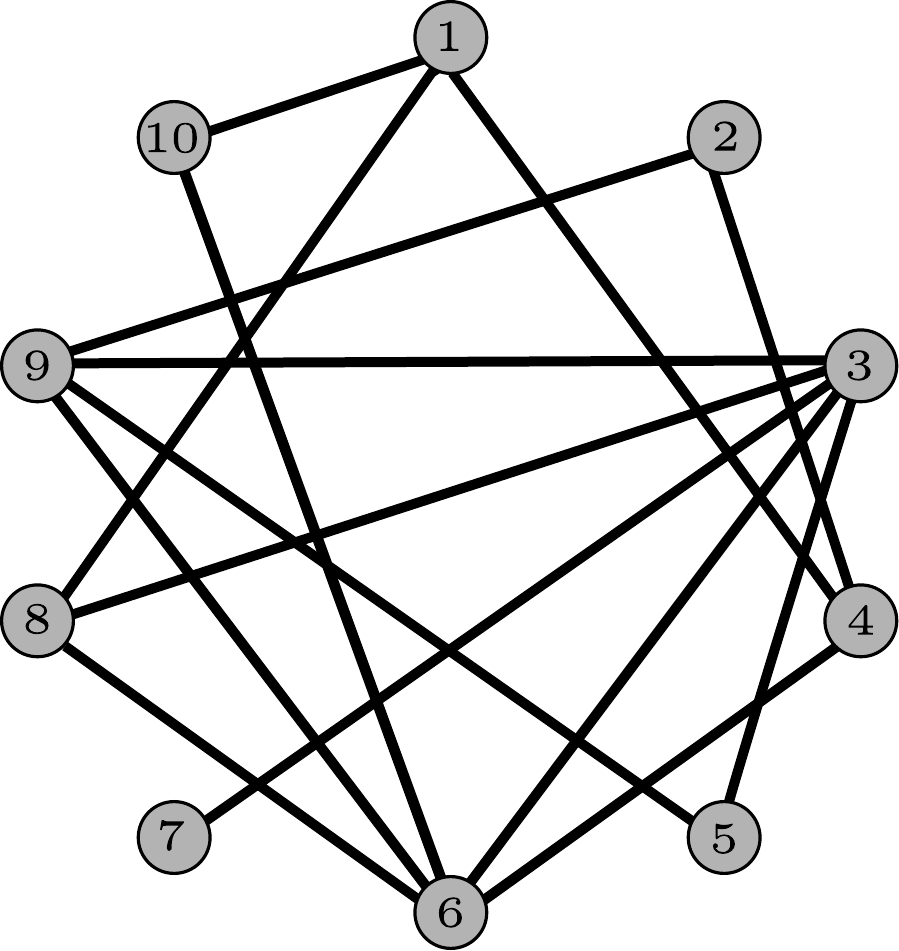}
\caption{$\mathcal{X}_3$}\label{fig:net3}
\end{subfigure}
\hspace{0.02\textwidth}
\begin{subfigure}[b]{.3\linewidth}
\includegraphics[width=\linewidth]{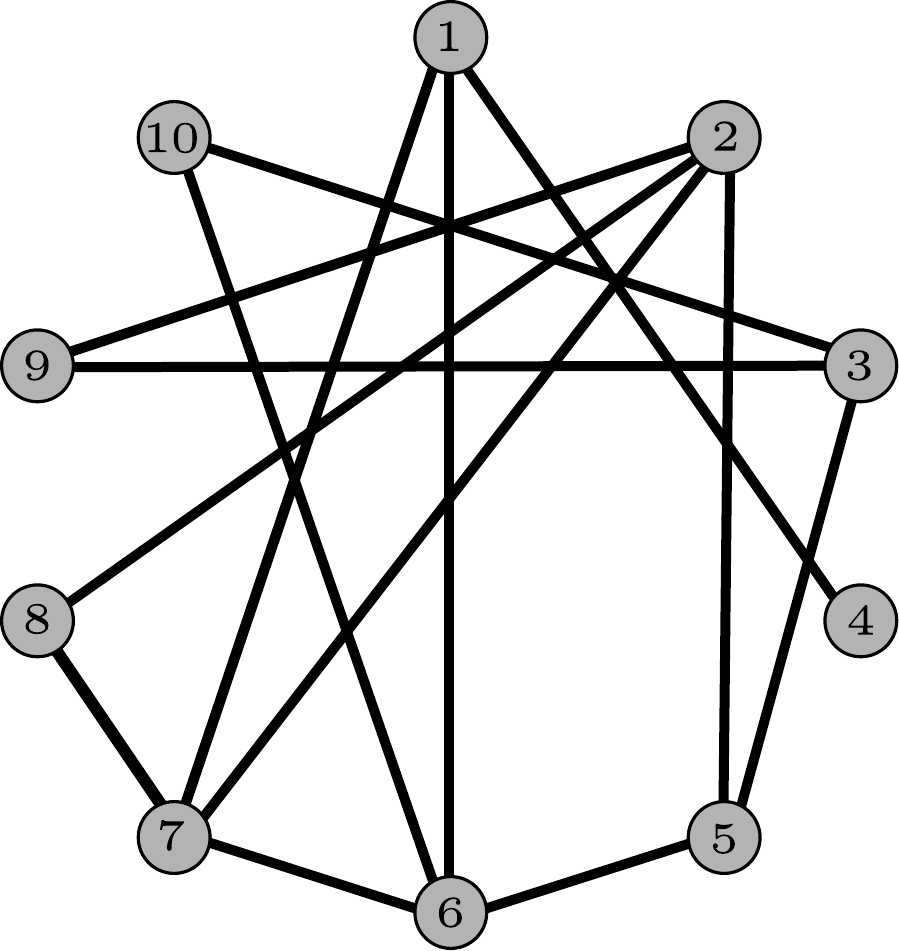}
\caption{$\mathcal{X}_4$}\label{fig:net4}
\end{subfigure}
\end{center}
\caption{Collection of communication topologies $\mathcal{F}=\{\mathcal{X}_1,\ldots,\mathcal{X}_4\}$ used for the switched dynamic network in Example \ref{Example_fdd}.}
\end{figure}

\graphicspath{{Simulations/}}
\begin{figure}[t]
\begin{center}
 \def\svgwidth{15cm}
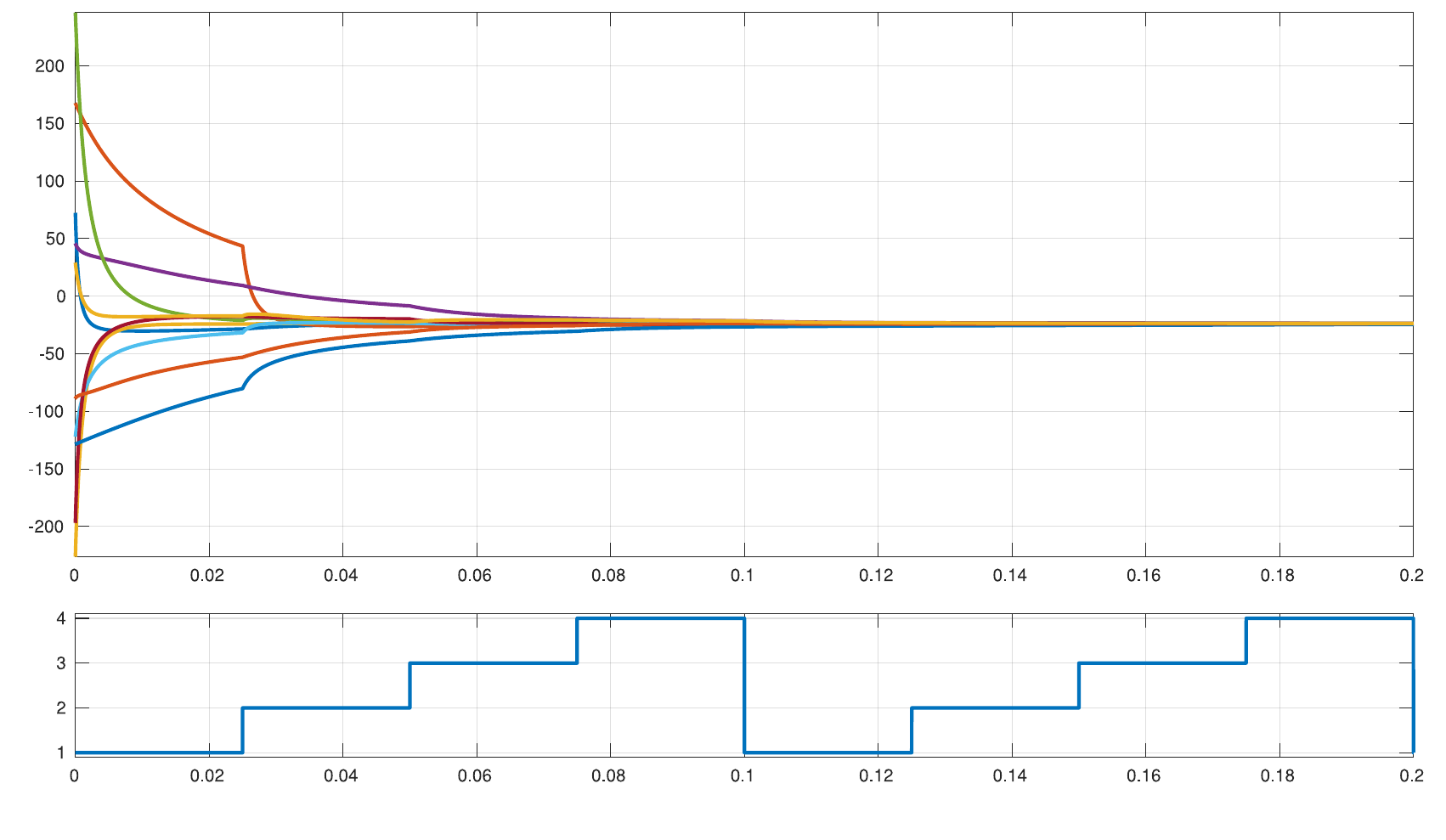
\end{center}
\caption{Convergence of the consensus algorithm for Example \ref{Example_fdd} and the corresponding switching signal.}
\label{Fig:FDD_plot}
\end{figure}

\end{example}

\subsubsection{Convergence analysis under consensus protocol (\ref{Eq:ProtocolB})}

Similarly to the previous subsection, let us define $x=[x_1 \ \cdots \ x_n]^T$. Notice that the dynamic of the network under the consensus algorithm \eqref{Eq:ProtocolB} is given by
\begin{equation}
\label{ConsensusDynamicB}
\dot{x}=-D(\mathcal{X}_{\sigma(t)})\Phi(D(\mathcal{X}_{\sigma(t)})^Tx)+\Delta(t).
\end{equation}
where, for $z=[z_1 \ \cdots \ z_n]^T\in\mathbb{R}^n$, the function $\Phi:\mathbb{R}^n\rightarrow \mathbb{R}^n$ is defined as \eqref{Eq:Fp}
and $\Delta(t)=[d_1(t) \ \cdots \ d_n(t)]^T$ with $\|\Delta(t)\|<L$.

Let us now study the stability of the closed-loop system (\ref{ConsensusDynamicB}). First, let us derive a useful lemma concerning average consensus in the absence of disturbance. Then, using an appropriate Lyapunov function, let us derive sufficient conditions for the design of control \eqref{Eq:ProtocolB} such that the consensus is achieved in a predefined-time under switching topologies

\begin{lemma}
\label{DeltaOrth}
Let $\mathcal{X}_{\sigma(t)}$ be a switching dynamic network composed of connected graphs and assume that under the protocol~\eqref{Eq:ProtocolB} and in the absence of disturbance consensus is achieved at a time $T_c$. Then, the consensus state is $x^*=\frac{1}{n}\mathbf{1}^Tx(t_0)$, i.e. consensus to the average is achieved.
\end{lemma}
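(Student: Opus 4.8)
The plan is to show that the total sum $\mathbf{1}^T x(t)$ is a conserved quantity of the unperturbed closed-loop dynamics and then to evaluate it at the consensus instant. The crucial structural fact is the identity $\mathbf{1}^T D(\mathcal{X}) = 0$, which holds for the incidence matrix of \emph{every} graph, as recorded in Remark~\ref{remark:Lapl_factorization}. Since the family $\mathcal{F}$ consists of graphs on a common vertex set, this identity holds simultaneously for each $D(\mathcal{X}_l)$, $l\in\{1,\ldots,m\}$, and therefore for $D(\mathcal{X}_{\sigma(t)})$ at every instant $t$.

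First I would set $\Delta(t)\equiv 0$ in the closed-loop dynamics~\eqref{ConsensusDynamicB}, obtaining $\dot{x} = -D(\mathcal{X}_{\sigma(t)})\Phi(D(\mathcal{X}_{\sigma(t)})^T x)$. Left-multiplying by $\mathbf{1}^T$ and using $\mathbf{1}^T D(\mathcal{X}_{\sigma(t)}) = 0$ yields $\tfrac{d}{dt}\bigl(\mathbf{1}^T x(t)\bigr) = \mathbf{1}^T\dot{x}(t) = 0$ on each interval over which $\sigma$ is constant. I would emphasize that this cancellation is independent of the particular nonlinear map $\Phi$, so the non-smoothness of the protocol plays no role in the conservation argument.

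Next I would argue that $\mathbf{1}^T x(t)$ is in fact constant over the whole interval $[t_0,T_c]$, not merely on each dwell interval. On every interval between switches the derivative vanishes; since the trajectory $x(t)$ is absolutely continuous (the state does not jump at switching instants, only the active incidence matrix changes) and the switching signal is non-Zeno, the value $\mathbf{1}^T x(t)$ matches across consecutive intervals. Hence $\mathbf{1}^T x(t) = \mathbf{1}^T x(t_0)$ for all $t\in[t_0,T_c]$.

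Finally, at the consensus time $t = T_c$ the hypothesis gives $x_1(T_c) = \cdots = x_n(T_c) = x^*$, i.e.\ $x(T_c) = x^*\mathbf{1}$, so that $\mathbf{1}^T x(T_c) = n\,x^*$. Combining this with the conservation law gives $n\,x^* = \mathbf{1}^T x(t_0)$, that is, $x^* = \tfrac{1}{n}\mathbf{1}^T x(t_0)$, which is the claimed average-consensus value. I do not anticipate a genuine obstacle here; the only point requiring care is the justification that the invariant survives the topology switches, which reduces to the continuity of the state together with the fact that $\mathbf{1}^T D(\mathcal{X}_l) = 0$ holds uniformly over the entire family $\mathcal{F}$.
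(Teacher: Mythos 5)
Your proposal is correct and follows essentially the same route as the paper's own proof: setting $\Delta(t)\equiv 0$, using $\mathbf{1}^TD(\mathcal{X}_{\sigma(t)})=0$ to show $\mathbf{1}^Tx(t)$ is conserved, and evaluating at the consensus time to obtain $x^*=\frac{1}{n}\mathbf{1}^Tx(t_0)$. Your explicit justification that the conserved quantity survives switching instants (via absolute continuity of $x(t)$ and the non-Zeno assumption) is a small point of care that the paper leaves implicit, but it is the same argument.
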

\begin{proof}
Let $s_x=\mathbf{1}^Tx$ be the sum of the agent states. In the absence of disturbances, i.e. if $\Delta(t)=0$, since $\mathbf{1}^TD(\mathcal{X}_{\sigma(t)})=0$, then $\dot{s}_x=-\mathbf{1}^T D(\mathcal{X}_{\sigma(t)})\Phi(D(\mathcal{X}_{\sigma(t)})^Tx)=0$. Thus, $s_x$ is constant during the evolution of the system, i.e. $\forall t\geq 0$, $s_x(t)=\mathbf{1}^Tx(t_0)=s_x(t_0)$. 


Therefore, 
$
s_x(t)-s_x(t_0)=\mathbf{1}^Tx-\mathbf{1}^Tx(t_0)=0.
$
Thus, if $t\geq T_c$, $x_1=\cdots=x_n=x^*$. Thus, $\mathbf{1}^Tx=nx^*$ and $x^*=\frac{1}{n}\mathbf{1}^Tx(t_0)$, i.e. consensus to the average is achieved.
\end{proof}

\begin{theorem}\label{eq_theorem2}
Let $\mathcal{F}=\{\mathcal{X}_1,\ldots,\mathcal{X}_m\}$ be a collection of connected graphs and let $\sigma(t):[0,\infty]\to \{1,\ldots,m\}$ be a non-Zeno switching signal. If 
\begin{equation}
\kappa_i\geq \frac{M\gamma(\rho)}{\lambda_2^*T_c}\text{ and } \zeta\geq\frac{L}{\kappa\sqrt{\lambda_2^*}}
\end{equation} 
with $$M = \min_{l\in\{1,\ldots,m\}}{|\mathcal{E}(\mathcal{X}_{l})|}, \ \kappa = \max_{l\in\{1,\ldots,n\}}{\kappa_l} \text{ and } \lambda_2^* = \min_{l\in\{1,\ldots,m\}}{\lambda_2(\mathcal{Q}(\mathcal{X}_{l}))},$$ 
and $\gamma(\rho)$ is defined in Eq. (\ref{Eq:MinUpperEstimate}), then, protocol~\eqref{Eq:ProtocolB} guarantees that consensus is achieved before a predefined-time $T_c$ on switched dynamic networks $\mathcal{X}_{\sigma(t)}$ under arbitrary switching signals $\sigma(t)$. Moreover, in the absence of disturbances consensus to the average is obtained.
\end{theorem}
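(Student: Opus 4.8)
The plan is to reproduce the Lyapunov argument of Theorem~\ref{theorem_control1_fixed}, but now with a Lyapunov function that does \emph{not} depend on the currently active graph, so that a single function certifies the decay for every member of $\mathcal{F}$ and therefore gives predefined-time convergence under arbitrary switching. First I would pass to a disagreement variable: writing $P=I-\tfrac{1}{n}\mathbf{1}\mathbf{1}^T$ and $\delta=Px$, one has $\mathbf{1}^T\delta=0$, and since $\mathbf{1}^TD(\mathcal{X}_l)=0$ (Remark~\ref{remark:Lapl_factorization}) it follows that $D(\mathcal{X}_{\sigma(t)})^Tx=D(\mathcal{X}_{\sigma(t)})^T\delta$ and $PD(\mathcal{X}_{\sigma(t)})=D(\mathcal{X}_{\sigma(t)})$. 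Hence the reduced dynamics associated with~\eqref{ConsensusDynamicB} reads $\dot\delta=-D(\mathcal{X}_{\sigma(t)})\Phi(D(\mathcal{X}_{\sigma(t)})^T\delta)+P\Delta(t)$, with $\delta=0$ exactly at consensus. As the common Lyapunov candidate I would take $V(\delta)=\tfrac{\sqrt{\lambda_2^*}}{M}\,\|\delta\|_2$, which is positive definite, radially unbounded, independent of $\sigma(t)$, and smooth away from $\delta=0$.

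Next I would compute the upper right Dini derivative along the reduced dynamics. Setting $y=D(\mathcal{X}_{\sigma(t)})^T\delta$ (the vector of edge disagreements $e_{ij}$), the key simplification is that $\delta^TD\Phi(D^T\delta)=y^T\Phi(y)$ collapses into a sum over the edges of the active graph, which splits into a nonlinear part $\sum_e \kappa_e|y_e|(\alpha|y_e|^p+\beta|y_e|^q)^k$ and a linear $\zeta$-part $\zeta\sum_e\kappa_e|y_e|$. For the nonlinear part I would invoke the convexity Lemma~\ref{Th:convex} (Jensen) over the $|\mathcal{E}(\mathcal{X}_{\sigma(t)})|$ edges, exactly as in the derivation of~\eqref{prot2_res1}, then use Lemma~\ref{Th:Norm} in the form $\|y\|_1\geq\|y\|_2$ together with the identity $\|y\|_2^2=\delta^T\mathcal{Q}(\mathcal{X}_{\sigma(t)})\delta\geq\lambda_2(\mathcal{Q})\|\delta\|^2\geq\lambda_2^*\|\delta\|^2$ (valid because $\delta\perp\mathbf{1}$, Lemma~\ref{lemma:Lambda2}). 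By Lemma~\ref{Th:monotone} this lower-bounds the whole nonlinear term in terms of $V$; this is precisely the step where the uniform constants $\lambda_2^*$ and $M$ (the worst-case edge count over $\mathcal{F}$) enter and render the estimate graph-independent.

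For the disturbance I would bound $\delta^TP\Delta(t)\leq\|\delta\|\,L$ and dominate it with the $\zeta$-part using $\|y\|_1\geq\sqrt{\lambda_2^*}\|\delta\|$ and the condition $\zeta\geq L/(\kappa\sqrt{\lambda_2^*})$, showing these two terms are jointly nonpositive exactly as in~\eqref{prot2_res2}. Combining the estimates should yield $\mathcal{D}^{+}V(\delta)\leq-\tfrac{\kappa\lambda_2^*}{M}(\alpha V^p+\beta V^q)^k$, and the gain condition $\kappa_i\geq M\gamma(\rho)/(\lambda_2^*T_c)$ upgrades this to $\mathcal{D}^{+}V\leq-\tfrac{\gamma(\rho)}{T_c}(\alpha V^p+\beta V^q)^k$, with $\gamma(\rho)$ as in~\eqref{Eq:MinUpperEstimate}. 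Because this inequality holds with the \emph{same} $V$ for every graph in $\mathcal{F}$, it holds along every admissible switching signal $\sigma(t)$, so $V$ is a common Lyapunov function and Theorem~\ref{thm:weak_pt} immediately gives predefined-time convergence with predefined time $T_c$. The average-consensus assertion in the disturbance-free case is then inherited directly from Lemma~\ref{DeltaOrth}.

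The main obstacle I anticipate is keeping all estimates \emph{uniform} over the collection: the Jensen step naturally produces the active quantities $|\mathcal{E}(\mathcal{X}_{\sigma(t)})|$ and $\lambda_2(\mathcal{Q}(\mathcal{X}_{\sigma(t)}))$, and these must be replaced by the single constants $M$ and $\lambda_2^*$ in the correct worst-case direction so that one fixed $V$ certifies decay for all topologies at once. A secondary subtlety is that the telescoping of $\delta^TD\Phi(D^T\delta)$ into a clean edge sum relies on treating the node gains $\kappa_i$ through their extremal value, which is why $\kappa$ and the uniform bounds on the gains appear in the stated constants; once this bookkeeping is handled, the remaining calculations follow the template of Theorem~\ref{theorem_control1_fixed} verbatim.
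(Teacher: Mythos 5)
Your route is the paper's route: the same graph-independent Lyapunov function $V=\frac{\sqrt{\lambda_2^*}}{M}\|\delta\|_2$, the same edge variable $v=D(\mathcal{X}_{\sigma(t)})^T\delta$, the same chain Lemma~\ref{Th:convex} $\to$ Lemma~\ref{Th:Norm} $\to$ $\delta^T\mathcal{Q}\delta\geq\lambda_2^*\|\delta\|^2$ $\to$ Lemma~\ref{Th:monotone}, the same domination of the disturbance by the $\zeta$-term, and the same conclusion via Theorem~\ref{thm:weak_pt}, the common-Lyapunov-function argument, and Lemma~\ref{DeltaOrth}. Your projection-based disagreement $\delta=Px$ with $P=I-\frac1n\mathbf{1}\mathbf{1}^T$ is in fact slightly cleaner than the paper's decomposition $x=\alpha\mathbf{1}+\delta$ with ``constant'' $\alpha$ (under disturbances the mean $\frac1n\mathbf{1}^Tx$ drifts, so the paper's $\alpha$ cannot literally be constant); your bound $\delta^TP\Delta(t)\leq L\|\delta\|$ handles this correctly and changes nothing downstream.

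The one step you explicitly leave open---replacing the active quantities by $M$ and $\lambda_2^*$ ``in the correct worst-case direction''---is precisely where the argument is delicate, because the active edge count $m=|\mathcal{E}(\mathcal{X}_{\sigma(t)})|$ enters the estimate \emph{twice with opposite monotonicity}, and you should resolve it rather than defer it. Writing $f(s)=s\,g(s)$ with $g(s)=(\alpha s^p+\beta s^q)^k$, Jensen gives
\begin{equation}
\frac{1}{\|\delta\|}\sum_{i=1}^{m}\kappa_i f(|v_i|)\;\geq\;\frac{\kappa m}{\|\delta\|}\,f\!\left(\frac{\|v\|_1}{m}\right)\;=\;\kappa\,\frac{\|v\|_1}{\|\delta\|}\,g\!\left(\frac{\|v\|_1}{m}\right),
\end{equation}
and the two factors must be bounded separately: $\kappa\|v\|_1/\|\delta\|\geq\kappa\sqrt{\lambda_2^*}$ is uniform in $m$, while $g(\|v\|_1/m)\geq g(\sqrt{\lambda_2^*}\|\delta\|/M)=g(V)$ requires $m\leq M$, i.e.\ $M=\max_{l}|\mathcal{E}(\mathcal{X}_l)|$, \emph{not} the minimum printed in the theorem statement. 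With $M=\min_l|\mathcal{E}(\mathcal{X}_l)|$ the single-shot step $f(\|v\|_1/m)\geq f(V)$ appearing in~\eqref{prot1_res1} fails whenever the active graph has more than $M$ edges, since $g$ is strictly increasing. So when fleshing out your plan, take $M$ to be the maximum edge count (consistent with the intuition that the gain requirement $\kappa_i\geq M\gamma(\rho)/(\lambda_2^*T_c)$ should tighten as graphs get larger); note also that the paper has the mirror-image slip in $\kappa$, stated as $\max_l\kappa_l$ but used in the proof as $\min\{\kappa_1,\dots,\kappa_n\}$---the $\min$ is the correct choice, both for pulling the gains out of the Jensen sum and for the disturbance bound $-\kappa\zeta\sqrt{\lambda_2^*}+L\leq 0$, and your phrase ``extremal value'' should be sharpened accordingly.
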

\begin{proof}
Let $\delta = [\delta_1, \dots , \delta_n]^T$ be a disagreement variable such that $x = \alpha\mathbf{1} + \delta$ where $\alpha$ is a consensus value, which is unknown but constant. 
Consider the Lyapunov function candidate
\begin{equation}
V = \frac{1}{M} \sqrt{\lambda_2^*\delta^T  \delta}.
\label{Eq:LyapunovProt1_0}
\end{equation}
To show that consensus is achieved on dynamic networks under arbitrary switchings, we will prove that~\eqref{Eq:LyapunovProt1_0} is a common Lyapunov function for each subsystem of the switched nonlinear system~\eqref{ConsensusDynamicB}~\cite[Theorem 2.1]{Liberzon2003}. To this aim, assume that $\sigma(t)=l$ for $t\in[0,T_c]$. By noticing that $\dot{\delta}=\dot{x}$, then it follows that
\begin{equation*}
\mathcal{D}^{+}V(x) = \frac{\sqrt{\lambda_2^*}}{M\sqrt{\delta^T\delta}}\delta^T\dot{\delta}  = -\frac{\sqrt{\lambda_2^*}}{M\sqrt{\delta^T\delta}}\delta^T(D(\mathcal{X}_l)\Phi(D(\mathcal{X}_l)^T\delta)+\Delta(t)),
\end{equation*}
Let $v = D(\mathcal{X}_l)^T\delta = (v_1,\dots,v_m)^T$, therefore:
\begin{align}
\mathcal{D}^{+}V(x) =& \frac{\sqrt{\lambda_2^*}}{M}\left(-\frac{1}{\|\delta\|} v^T\Phi_p(v) + \frac{\delta^T}
{\|\delta\|}\Delta(t)\right)\nonumber \\
  & = \frac{\sqrt{\lambda_2^*}}{M} \left(-\frac{1}{\|\delta\|}\sum_{i=1}^m\kappa_i|v_i|(\alpha |v_i|^p + \beta|v_i|^q)^k - \frac{\zeta}{\|\delta\|}\sum_{i=1}^m \kappa_i|v_i|  + \frac{\delta^T}
{\|\delta\|}\Delta(t)\right).\label{prot1_firsteq}
\end{align}
Using Lemma \ref{Th:convex}, the first term can be rewritten as:
\begin{align*}
\frac{m}{\|\delta\|}\sum_{i=1}^m\frac{1}{m}\kappa_i|v_i|(\alpha |v_i|^p + \beta|v_i|^q)^k &\geq \frac{ \kappa m}{\|\delta\|}\sum_{i=1}^m\frac{1}{m}|v_i|(\alpha |v_i|^p + \beta|v_i|^q)^k\\ &\geq\frac{ \kappa m}{\|\delta\|}\left(\frac{1}{m}\|v\|_1\right)\left(\alpha \left(\frac{1}{m}\|v\|_1\right)^p + \beta\left(\frac{1}{m}\|v\|_1\right)^q\right)^k,
\end{align*}
where $\kappa = \min\{\kappa_1,\dots,\kappa_n\}$ and $\|v\|_1 = \sum_{i=1}^m |v_i|$.
Moreover, it follows from Lemma~\ref{Th:Norm} that
\begin{equation*}
\|v\|_1 \geq \|v\|_2 = v^Tv = \sqrt{\delta^T \mathcal{Q}(\mathcal{X}_l)\delta} \geq \sqrt{\lambda_2^*}\|\delta\|.
\end{equation*}
Therefore, by Lemma \ref{Th:monotone}:
\begin{align}
\label{prot1_res1}
\frac{ m}{\|\delta\|}\sum_{i=1}^m\frac{1}{m}\kappa_i|v_i|(\alpha |v_i|^p + \beta|v_i|^q)^k &\geq \frac{\kappa m}{\|\delta\|}V(\alpha V^p + \beta V^q)^k \geq \kappa\sqrt{\lambda_2^*} (\alpha V^p + \beta V^q)^k.
\end{align}
Furthermore, from the last two terms of \eqref{prot1_firsteq}, the following is obtained:
\begin{equation}
\label{prot1_res2}
- \frac{\zeta}{\|\delta\|}\sum_{i=1}^m \kappa_i|v_i|  + \frac{\delta^T}
{\|\delta\|}\Delta(t) \leq -\frac{\kappa \zeta}{\|\delta\|}\|v\|_1 + \|\Delta(t)\|\leq -\zeta\kappa\sqrt{\lambda_2^*} + L \leq 0
\end{equation}
Therefore, the following inequality is obtained from \eqref{prot1_firsteq}, by combining \eqref{prot1_res1} and \eqref{prot1_res2}:
\begin{equation}
\mathcal{D}^{+}V(x) \leq -\frac{\kappa\lambda_2^*}{M}(\alpha V^p + \beta V^q)^k \leq -\frac{\gamma(\rho)}{T_c}(\alpha V^p + \beta V^q)^k
\end{equation}
Then, according to Theorem~\ref{thm:weak_pt}, protocol~\eqref{Eq:ProtocolB} guarantees that the consensus is achieved before a predefined-time $T_c$. Moreover, since the above argument holds for any $\mathcal{X}_l\in\mathcal{F}$, then protocol~\eqref{Eq:ProtocolB} guarantees that the consensus is achieved before a predefined-time $T_c$, on switching dynamic networks under arbitrary switching. Furthermore, it follows from Lemma~\ref{DeltaOrth} that if $\Delta(t)=0$ then the consensus state $x^*$ is the average of the initial values of the agents. 
\end{proof}

\begin{remark}
Note that existing consensus protocols such as~\cite{Zuo2014,Zuo2014a,Ning2017b,Wang2017b} are subsumed in our approach. In fact, the consensus protocols in~\cite{Zuo2014,Ning2017b,Wang2017b} are derived from~\eqref{Eq:ProtocolA} and \eqref{Eq:ProtocolB} by taking $k=1$, $p=1-s$ and $q=1+s$, $s\in(0,1)$. Moreover, we show that our analysis provides less conservative estimate than the ones proposed, for instance in~\cite{Zuo2014a}. Thus, our paper contributes to a broader class of consensus protocols with fixed-time convergence. Moreover, unlike~\cite{Zuo2014,Zuo2014a,Ning2017b}, in our method the convergence time is determined a priori.
\end{remark}

\begin{remark}
Notice that, the Lyapunov functions in our proofs are different than in previous methods, such as \cite{Wang2010,Zuo2014,Zuo2014a,Ning2017b}. Consequently, the proposed predefined-time consensus results do not follow trivially from the existing literature. An essential part of our approach is the convexity result provided in Lemma~\ref{Th:convex_fundamental}.
\end{remark}

\begin{example}
\label{Example_dfd}
Consider the multi-agent system (\ref{Eq:AgentDyn}) composed of $n=10$ agents with external perturbation $d_i(t) = \sin(40t + 0.1i)$. The collection of communication topologies, given in Figure~\ref{fig:net1}-\ref{fig:net4} are undirected. The switched dynamic network evolves according to the switching signal $\sigma(t)$ given in Figure~\ref{Fig:DFD_plot} which satisfies the minimum dwell time condition. The corresponding algebraic connectivity is $\lambda_2(\mathcal{Q}(\mathcal{X}_l)) \in \{0.16548 ,  0.73648 ,  0.15776 ,  0.57104\}$. The initial conditions of the agents are randomly generated and are as follows:
$$x(t_0)=[72.31  167.49 -226.30,   45.68,  246.20, -121.78, -196.90, -128.59,  -88.57,   29.05]^T.$$
According to Theorem \ref{eq_theorem2}, protocol~\eqref{Eq:ProtocolB} with $T_c=1$, $p = 1.5, q = 3.0, k=0.5, \alpha = 1, \beta = 2, \{\kappa_1,\dots,\kappa_4\}=\{241.5668,  54.2777,  253.3879,   70.0029\},\kappa = 54.2777$ and $\zeta = 0.3693$ guarantees that the consensus is achieved in a predefined-time $T_c$ under the switched dynamic network $\mathcal{X}_{\sigma(t)}$. Figure~\ref{Fig:FDD_plot} shows the corresponding result. For this example, the settling time is 0.187s.

\begin{figure}[t]
\begin{center}
\graphicspath{{Simulations/}}
 \def\svgwidth{15cm}
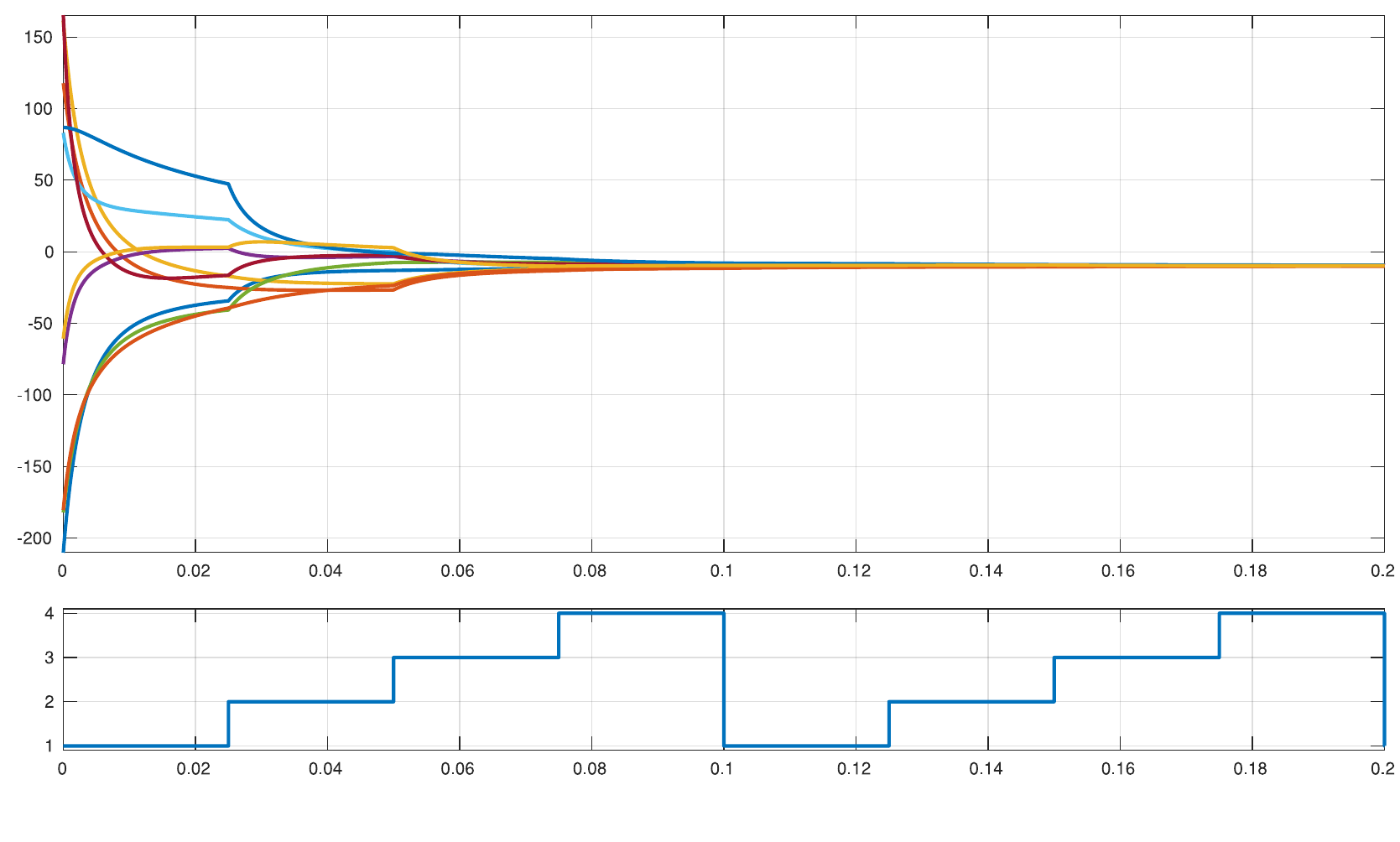
\end{center}
\caption{Convergence of the consensus algorithm for Example \ref{Example_dfd} and the corresponding switching signal.}\label{Fig:DFD_plot}
\end{figure}

\end{example}

\section{Comparison and Discussion}
\label{Sec:Disc}

In this section, we present a discussion on the contribution with respect to the state of the art in the literature. Our main argument is that our approach subsumes existing methods, as it allows a broader range of parameter selection, which provides extra degrees of freedom to design more efficient consensus protocols (for instance, allowing to reduce the slack between the true convergence time and the predefined-convergence time). 

Table~\ref{Tab:ComparisonA} and Table~\ref{Tab:ComparisonB} present a comparison between the current proposal and some of the existing fixed-time consensus protocols for the leaderless consensus problem when using autonomous protocols. Notice that contrary to the results existing in the literature, our approach does not present any restriction on the parameters $p$, $k$ and $q$ (additional to those given in Theorem~\ref{th:tf_poly} for~\eqref{Eq:HomFixedPoly}). Regarding consensus protocols in the form~\eqref{Eq:ProtocolA}, it is illustrated in Table~\ref{Tab:ComparisonA} that for switched dynamic networks under arbitrary switching among connected graphs, our approach provides fixed-time convergence (no upper bound convergence is provided in this case); whereas for static networks, it provides predefined-time convergence, even if the agents are affected by disturbances. Regarding consensus protocols in the form~\eqref{Eq:ProtocolB}, see~Table~\ref{Tab:ComparisonB}, for switched dynamic networks among connected graphs, our approach provides predefined-time convergence for agents with single integrator dynamics affected by disturbances. 

Regarding the estimation of the upper bound of the convergence time, the results in~\cite{Zuo2014a} are based on the estimate provided in~\cite{Polyakov2012}, which has been shown in~\cite{Aldana-Lopez2018} to be very conservative. Moreover, to address the robust consensus problem in~\cite{Zuo2016} an estimate improving the result in~\cite{Polyakov2012} was presented for the case where $k=1$, $q=\frac{\hat{m}}{\hat{n}}$, $p=\frac{\hat{p}}{\hat{q}}$ satisfying
$$
0<\frac{\hat{q}(\hat{m}-\hat{n})}{\hat{n}(\hat{q}-\hat{p})}\leq 1.
$$
However, such estimation is also too conservative. The proposed approach is based on the predefined-time stability results provided in~\cite{Aldana-Lopez2018} which extend the results in~\cite{Parsegov2012} (in~\cite{Parsegov2012}, they were limited to $k=1$, $p=1-s$, $q=1+s$, $0< s <1$) by showing that~\eqref{Eq:HomFixedPoly} is predefined-time stable, with $T_c$ as the least upper bound of the convergence-time, for every $p,q,k>0$ satisfying $kp<1$ and $kq>1$. Thus, our approach has the same upper estimate as~\cite{Ning2017b} for the case when $k=1$, $p=1-s$, $q=1+s$, $0< s <1$ but provides a broader class of consensus protocols than \cite{Gomez-Gutierrez2018,Zuo2014,Zuo2014a,Ning2017b,Wang2017b} by having extra degrees of freedom on $p$, $q$, $k$ as long as the conditions $p,q,k>0$, $kp<1$ and $kq>1$ are met. As we show below, such broader parameter selection allows to design consensus protocols with improved estimates of the convergence time bound.

Regarding consensus protocols in the form~\eqref{Eq:ProtocolA}, in addition to the above mentioned advantages, in this paper we show, using an homogeneity approach, that~\eqref{Eq:ProtocolA} is a fixed-time consensus algorithm for switched dynamic networks under arbitrarily switching among connected graphs, which extends the algorithm in~\cite{Gomez-Gutierrez2018} to allow $k\neq1$ and disturbances affecting the agents.

\begin{table}
    \centering
    \begin{tabular}{|c|p{4cm}|p{1.6cm}|p{1.6cm}|p{3.5cm}|}
    \hline
        Reference & Additional restrictions on $k$, $p$, $q$& Network Type & Robustness & Upper bound Estimate/ Comparison to ours\\
        \hline         
        \cite{Gomez-Gutierrez2018} & $k=1$& Jointly Connected & No &  None provided. Based on homogeneity. \\
         \hline
         \cite{Zuo2014} & $k=1$, $p=1-s$, $q=1+s$, $0< s <1$ & Static Connected &  No & from~\cite{Parsegov2012}, Same \\
          \hline
         \cite{Zuo2014a} & $k=1$ & Static Connected & No & from~\cite{Polyakov2012}, More conservative \\
         \hline
         \cite{Zuo2016} & $k=1$ & Static Connected & Yes & from~\cite{Polyakov2012}, More Conservative \\
         \hline
         \cite{Zuo2016} & $k=1$, $q=\frac{\hat{m}}{\hat{n}}$, $p=\frac{\hat{p}}{\hat{q}}$ satisfying
$0<\frac{\hat{q}(\hat{m}-\hat{n})}{\hat{n}(\hat{q}-\hat{p})}\leq 1.
$ & Static Connected & Yes & from~\cite{Zuo2016}, More Conservative \\
         \hline
         \cite{Ning2017b} & $k=1$, $p=1-s$, $q=1+s$, $0< s <1$ & Static Connected &  Yes  &  from~\cite{Parsegov2012}, Same\\
         \hline
         Ours & None & Dynamic Connected &  Yes  & None provided. Based on homogeneity. \\
         \hline
         Ours & None & Static Connected &  Yes  & from~\cite{Aldana-Lopez2018} \\
         \hline
     \end{tabular}
     \caption{Comparison of papers presenting autonomous protocols of the form~\eqref{Eq:ProtocolA} for the consensus problem.}
     \label{Tab:ComparisonA}
 \end{table}

 \begin{table}
    \centering
    \begin{tabular}{|c|p{4cm}|p{1.6cm}|p{2cm}|p{2.6cm}|}
    \hline
        Reference & Additional restrictions on $k$, $p$, $q$& Network Type &  Robustness & Upper bound Estimate/ Comparison to ours\\
        \hline
        \cite{Parsegov2013} & $k=1$ & Static Connected & No & from~\cite{Polyakov2012}, More Conservative \\
        \hline
        \cite{Parsegov2013} & $k=1$, $p=1-s$, $q=1+s$, $0< s <1$ & Static Connected & No & from~\cite{Parsegov2012}, Same \\
        \hline
        \cite{Zuo2014} & $k=1$, $p=1-s$, $q=1+s$, $0< s <1$ & Static Connected & No & from~\cite{Parsegov2012}, Same \\
         \hline
        \cite{Zuo2014a} & $k=1$ & Dynamic Connected & No & from~\cite{Polyakov2012}, More Conservative \\
        \hline
        \cite{Zuo2016} & $k=1$ & Static Connected & Yes & from~\cite{Polyakov2012}, More Conservative \\
        \hline
        \cite{Zuo2016} & $k=1$, $q=\frac{\hat{m}}{\hat{n}}$, $p=\frac{\hat{p}}{\hat{q}}$ satisfying
$0<\frac{\hat{q}(\hat{m}-\hat{n})}{\hat{n}(\hat{q}-\hat{p})}\leq 1.
$ & Static Connected & Yes & from~\cite{Zuo2016}, More Conservative \\
        \hline
        \cite{Hong2017} & $k=1$ & Static Connected & Yes & from~\cite{Polyakov2012}, More Conservative \\
        \hline
        \cite{Wang2017b} & $k=1$, $p=1-s$, $q=1+s$, $0< s <1$ & Dynamic Connected & No & from~\cite{Parsegov2012}, Same\\
        \hline
        \cite{Ni2017} & $k=1$, $p=1-s$, $q=1+s$, $0< s <1$ & Dynamic Connected & No & from~\cite{Parsegov2012}, Same\\
        \hline
        \cite{Ning2017b} & $k=1$, $p=1-s$, $q=1+s$, $0< s <1$ & Dynamic Connected &  Yes &  from~\cite{Parsegov2012}, Same\\
        \hline
        Ours & None & Dynamic Connected & Yes & from~\cite{Aldana-Lopez2018} \\
        \hline
    \end{tabular}
    \caption{Comparison of papers presenting autonomous protocols of the form~\eqref{Eq:ProtocolB} for the consensus problem.}
    \label{Tab:ComparisonB}
\end{table}

\subsection{Comparison}

Notice from Table~\ref{Tab:ComparisonA} and Table~\ref{Tab:ComparisonB} that previous fixed-time consensus protocols were limited by a parameter selection with $k=1$ and most of them allow only $p=1-s$, $q=1+s$, $0< s <1$. In this subsection, we present numerical simulations and comparisons to illustrate that by having greater flexibility in the protocol design, one can obtain protocols with improved estimates of the convergence time bound.

In our opinion, relevant robust consensus algorithms for agents with single integrator dynamics are given in~\cite{Ning2017b} for the leaderless consensus problem. We show that, by selecting parameters different from the ones proposed in~\cite{Ning2017b}, we obtain robust consensus protocols where the slack between the real convergence time and the estimated upper bound is lower. 

This simulation results are presented in Example~\ref{Example_comp_fdd} for protocol~\eqref{Eq:ProtocolA} and in Example~\ref{Example_comp_dfd} for protocol~\eqref{Eq:ProtocolB}. The first case, $p = 0.1, q = 0.9, k=1$ corresponds to a selection allowed by~\cite{Ning2017b} for which the gains are computed from~\cite{Ning2017b} to have an upper bound for the convergence time $T_c=1s$. For the second case, our aim is to show that by varying $k\neq 1$, the slack between the true convergence time and the upper bound of the convergence time $T_c=1$ can be reduced. Moreover, we show that such slack can be further reduced by selecting values for $p$ and $q$ that are not symmetrical w.r.t. one, i.e. values that are outside the parameter selection allowed in~\cite{Ning2017b}. Thus, our approach subsumes existing fixed-time consensus protocols and provides more flexibility in the parameter selection to improve, for instance, the convergence. 

\begin{example}
\label{Example_comp_fdd}
Consider the multi-agent system (\ref{Eq:AgentDyn}) composed of $n=10$ agents with external perturbation $d_i(t) = \sin(40t + 0.1i)$.. The collection of communication topologies, given in Figure~\ref{fig:net1}-\ref{fig:net4} are undirected. The switched dynamic network evolves according to the switching signal $\sigma(t)$ given in Figure~\ref{Fig:fdd_comp} which satisfies the minimum dwell time condition. The initial conditions of the agents are randomly generated and are as follows:
$$x(t_0)=[-210.02,  117.66,  161.32,  -78.30, -181.93,   82.97,  165.22,   86.81, -180.27,  -60.58
]^T.$$
Protocol~\eqref{Eq:ProtocolA} with, $\alpha = 1, \beta = 2, \{\kappa_1,\dots,\kappa_4\}=\{301.9585,   67.8472,  316.7348,   87.5037\}, \kappa = 67.8472$ and $\zeta=0.0466$ and the parameter selection described in Table~\ref{Tab:Example_fdd} achieves consensus in fixed-time. Figure~\ref{Fig:fdd_comp} and Table~\ref{Tab:Example_fdd} show the corresponding result. 
\begin{figure}[t]
\begin{center}
\graphicspath{{Simulations/}}
 \def\svgwidth{15cm}
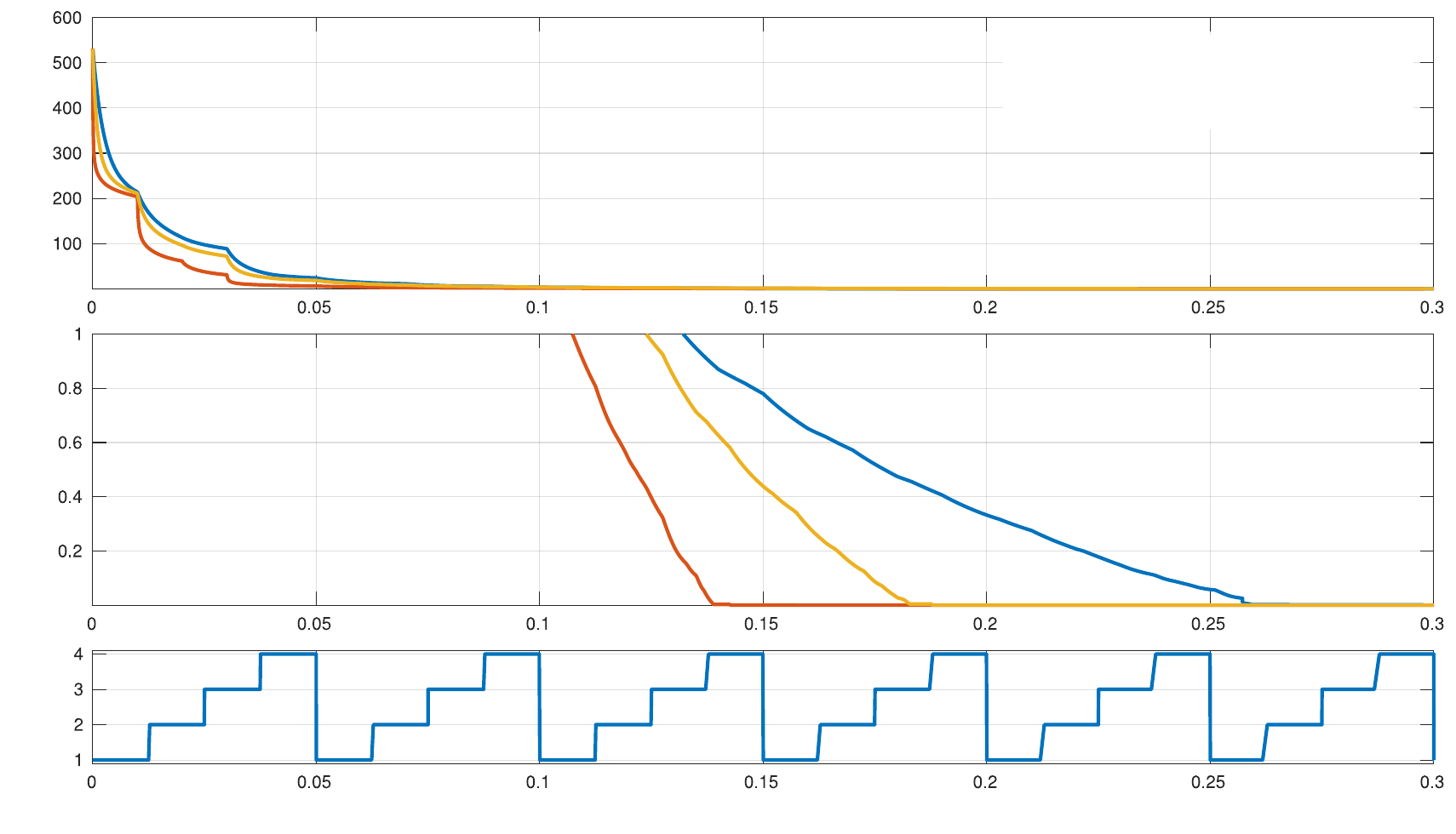
\end{center}
\caption{Convergence of the consensus algorithm for Example \ref{Example_comp_fdd} and the corresponding switching signal.}\label{Fig:fdd_comp}
\end{figure}

\begin{table}
    \centering
    \begin{tabular}{|c|c|c|}
    \hline
        Parameters & Convergence time for protocol~\eqref{Eq:ProtocolA} & Convergence time for protocol~\eqref{Eq:ProtocolB}  \\
        \hline
        $p = 0.1, q = 0.9, k=1.0$ & 0.138s & 0.105s\\
         \hline
         $p = 0.1, q = 1.9, k=0.75$ &0.185s & 0.127s\\
         \hline
         $p = 1.5, q = 12, k=0.1$ & 0.258s & 0.212s\\
         \hline
    \end{tabular}
    \caption{Comparison of convergence time for protocol~\eqref{Eq:ProtocolA} and protocol ~\eqref{Eq:ProtocolA} under different parameter selection.}
    \label{Tab:Example_fdd}
\end{table}
\end{example}

\begin{example}
\label{Example_comp_dfd}
Consider the multi-agent system (\ref{Eq:AgentDyn}) composed of $n=10$ agents with external perturbation $d_i(t) = \sin(40t + 0.1i)$.. The collection of communication topologies, given in Figure~\ref{fig:net1}-\ref{fig:net4} are undirected. The switched dynamic network evolves according to the switching signal $\sigma(t)$ given in Figure~\ref{Fig:fdd_comp} which satisfies the minimum dwell time condition. The initial conditions of the agents are randomly generated and are as follows:
$$x(t_0)=[72.31  167.49 -226.30,   45.68,  246.20, -121.78, -196.90, -128.59,  -88.57,   29.05]^T.$$
Protocol~\eqref{Eq:ProtocolB} with $T_c=1$, $\alpha = 1, \beta = 2, \{\kappa_1,\dots,\kappa_4\}=\{241.5668,  54.2777,  253.3879,   70.0029\},\kappa = 54.2777$ and $\zeta = 0.3693$ and the parameter selection described in Table~\ref{Tab:Example_fdd} achieves consensus in fixed-time. Figure~\ref{Fig:dfd_comp} and Table~\ref{Tab:Example_fdd} show the corresponding result.

 \begin{figure}[t]
\begin{center}
\graphicspath{{Simulations/}}
 \def\svgwidth{15cm}
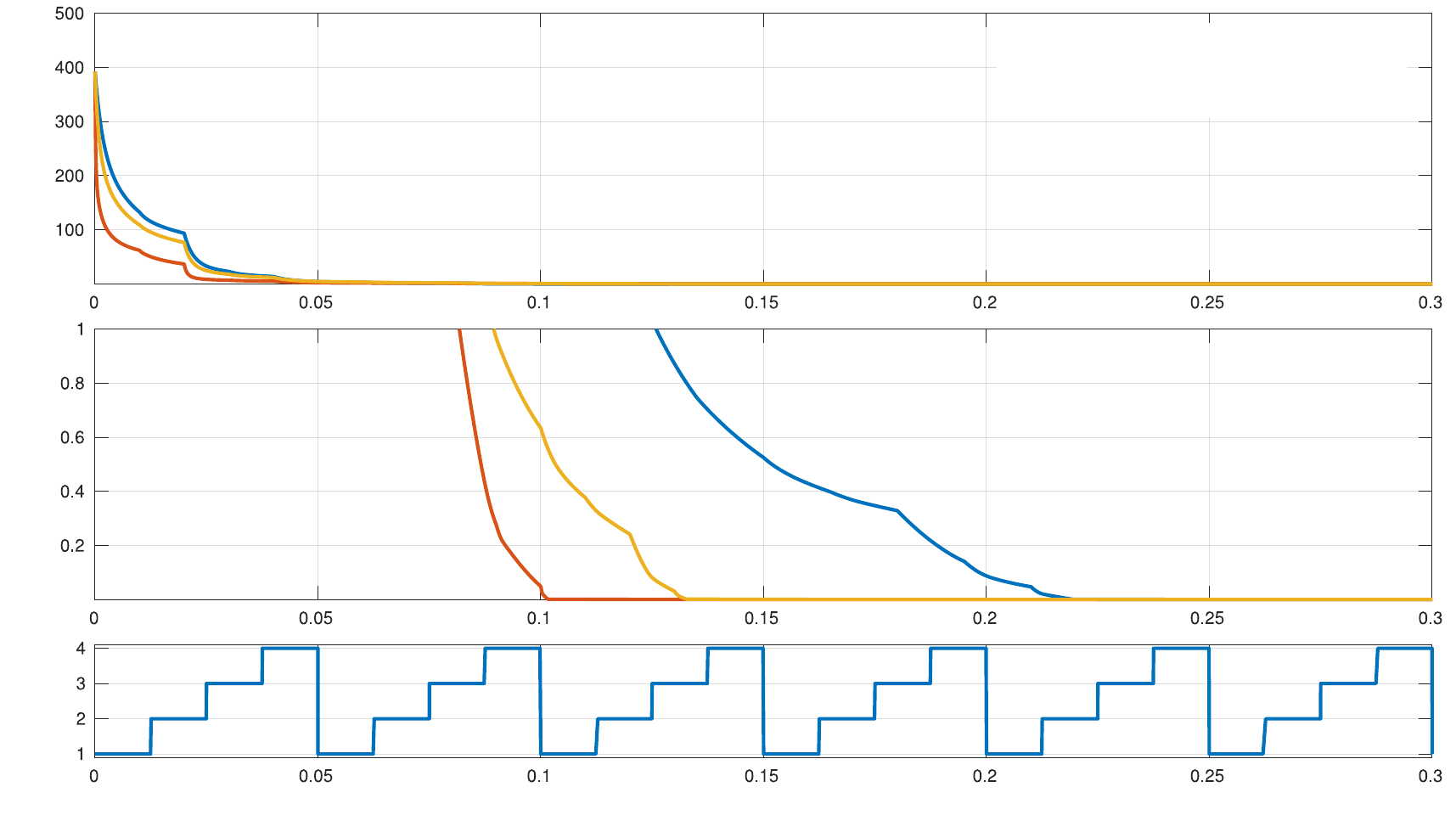
\end{center}
\caption{Convergence of the consensus algorithm for Example \ref{Example_comp_dfd} and switching signal}\label{Fig:dfd_comp}
\end{figure}

\end{example}
 
\section{Conclusion}
\label{Sec:Concl}
In this paper, we presented two robust consensus protocols for perturbed agents with single integrator dynamics. The first one is the one that is computationally simpler; we show that it presents predefined-time convergence under static networks and fixed-time convergence under switched dynamic networks. The second one, in the absence of disturbances, converges to a consensus state that is the average of the agents' initial conditions. This algorithm is shown to be a robust predefined-time consensus algorithm for static and dynamic networks arbitrarily switching among connected graphs. The effectiveness of our approach is shown in simulations where we show that our approach subsumes existing fixed-time consensus protocols and provides more flexibility in the parameter selection, for instance, to improve the convergence. Numerical results were given to illustrate the effectiveness and advantages of the proposed approach and qualitative comparisons were presented to expose the contribution.

\appendix
\section{Some useful inequalities}
\begin{lemma}\cite[Formula 5]{jensen1906}
\label{Th:Jensen}
Let $f(x)$ be a real-valued function and $\lambda_1,\dots,\lambda_n\in[0,1]$ satisfy $\sum_{i=1}^n \lambda_i = 1$. Therefore, $f(x)$ is a convex function if and only if it satisfies Jensen's inequality:
\begin{equation}
\label{eq:jensen}
f\left(\sum_{i=1}^n \lambda_i x_i\right) \leq \sum_{i=1}^n \lambda_if(x_i)
\end{equation}
\end{lemma}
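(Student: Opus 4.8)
The plan is to prove the two directions of the stated equivalence separately, using the two-point case ($n=2$) as the hinge, since that case is precisely the definition of convexity. For the \emph{backward} direction, I would simply specialize the hypothesized inequality~\eqref{eq:jensen} to $n=2$ with weights $\lambda$ and $1-\lambda$: this yields $f(\lambda x_1 + (1-\lambda)x_2) \leq \lambda f(x_1) + (1-\lambda)f(x_2)$ for all $\lambda\in[0,1]$, which is exactly the definition of convexity, so nothing further is needed.

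For the \emph{forward} direction, assuming $f$ convex, I would prove~\eqref{eq:jensen} by induction on the number of points $n$. The cases $n=1$ (trivial) and $n=2$ (the definition of convexity) serve as the base. For the inductive step, assuming the claim for $n-1$ points, I would take weights $\lambda_1,\dots,\lambda_n$ with $\sum_{i=1}^n\lambda_i=1$; if $\lambda_n=1$ the statement is trivial, so I may assume $\mu:=1-\lambda_n=\sum_{i=1}^{n-1}\lambda_i>0$ and renormalize by $\tilde\lambda_i=\lambda_i/\mu$, so that $\sum_{i=1}^{n-1}\tilde\lambda_i=1$. Decomposing the convex combination as
\begin{equation*}
\sum_{i=1}^{n}\lambda_i x_i=\mu\left(\sum_{i=1}^{n-1}\tilde\lambda_i x_i\right)+\lambda_n x_n,
\end{equation*}
I would first apply two-point convexity to peel off the $n$-th term, then invoke the inductive hypothesis on the inner combination $\sum_{i=1}^{n-1}\tilde\lambda_i x_i$. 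Finally, using $\mu\tilde\lambda_i=\lambda_i$ to undo the renormalization recovers the bound $\sum_{i=1}^{n}\lambda_i f(x_i)$, closing the induction.

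There is no genuine difficulty in this argument; the only step requiring attention is the bookkeeping at the boundary of the weight simplex, namely isolating the degenerate case $\lambda_n=1$ (equivalently $\mu=0$) before dividing, so that the renormalization $\tilde\lambda_i=\lambda_i/\mu$ is well defined. Once this case is dispatched, the induction proceeds mechanically.
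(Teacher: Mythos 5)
Your proof is correct and complete: the backward direction is exactly the $n=2$ specialization, and your induction on $n$ with the renormalized weights $\tilde\lambda_i=\lambda_i/\mu$, including the explicit dispatch of the degenerate case $\mu=0$ before dividing, is the classical argument for the forward direction. There is nothing in the paper to compare it against: the lemma is quoted verbatim from Jensen's 1906 paper as a known result and no proof is given in the text, so your write-up supplies the standard textbook proof that the citation points to. One minor point of hygiene: the lemma as stated says only ``let $f(x)$ be a real-valued function,'' but for the statement (and your proof) to make sense $f$ must be defined on a convex subset of $\mathbb{R}$ containing all the points $x_i$, so that the convex combinations $\sum_{i=1}^{n}\lambda_i x_i$ and $\sum_{i=1}^{n-1}\tilde\lambda_i x_i$ lie in the domain of $f$; your argument implicitly uses this, and it would be worth stating.
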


\begin{lemma}
\label{Th:monotone}
Let 
\begin{equation}
\label{eq:poly_func}
f(x) = x(\alpha x^p + \beta x^q)^k
\end{equation}
for $\alpha,\beta,p,q,k>0$, $pk<1$ and $qk>1$. 
Then, $f(x)$ is monotonically increasing for $x>0$. 
\end{lemma}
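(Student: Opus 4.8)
The plan is to establish strict monotonicity by showing directly that $f'(x)>0$ for every $x>0$. First I would observe that on $(0,\infty)$ the base $\alpha x^p+\beta x^q$ is strictly positive, so $f$ is a product and composition of smooth strictly positive functions and is therefore differentiable on $(0,\infty)$, with $f(x)>0$ there. This positivity is what lets me read off the sign of $f'$ cleanly.

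Next I would apply the product and chain rules. Writing $g(x)=\alpha x^p+\beta x^q$, so that $f(x)=x\,g(x)^k$, differentiation gives
\[
f'(x)=g(x)^k+kx\,g(x)^{k-1}g'(x)=g(x)^{k-1}\bigl(g(x)+kx\,g'(x)\bigr).
\]
Since $g(x)>0$ implies $g(x)^{k-1}>0$ for $x>0$, the sign of $f'(x)$ is governed entirely by the bracketed factor, so it remains to show that factor is positive.

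Then I would expand the bracket explicitly. Using $g'(x)=\alpha p\,x^{p-1}+\beta q\,x^{q-1}$, a short computation yields
\[
g(x)+kx\,g'(x)=\alpha(1+kp)\,x^p+\beta(1+kq)\,x^q.
\]
Because $\alpha,\beta,k,p,q>0$, both coefficients $1+kp$ and $1+kq$ are strictly positive and $x^p,x^q>0$ for $x>0$; hence the right-hand side is a sum of strictly positive terms and is therefore positive. Combining this with $g(x)^{k-1}>0$ gives $f'(x)>0$ on $(0,\infty)$, so $f$ is monotonically increasing, as claimed. A slightly slicker variant is logarithmic differentiation: since $f(x)>0$, the sign of $f'(x)$ equals that of $\tfrac{f'(x)}{f(x)}=\tfrac1x+k\,\tfrac{g'(x)}{g(x)}$, a sum of manifestly positive quantities.

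I do not expect a genuine obstacle here, since the estimate is elementary; the only point worth flagging is that the hypotheses $pk<1$ and $qk>1$ are not actually used — positivity of the five parameters alone suffices — so I would remark that these two inequalities are retained only for consistency with the surrounding results (Theorem~\ref{th:tf_poly} and Theorem~\ref{thm:weak_pt}), where they are essential for fixed-time and predefined-time stability.
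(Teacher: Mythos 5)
Your proof is correct and follows essentially the same route as the paper's: direct differentiation of $f$ and observing that every term of $f'(x)$ is positive for $x>0$ (the paper simply displays the derivative and notes it is nonnegative, while you additionally factor out $g(x)^{k-1}$ and expand the bracket to $\alpha(1+kp)x^p+\beta(1+kq)x^q$). Your side remark is also accurate: the conditions $pk<1$ and $qk>1$ play no role here and are needed only in the surrounding fixed-time results.
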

\begin{proof}
The direct differentiation of $f(x)$ results in:
$$
\frac{d}{dx}f(x) = (\alpha x^p + \beta x^q)^k + kx(\alpha x^p + \beta x^q)^{k-1}(\alpha p x^{p-1} + \beta  x^{q-1})\geq 0
$$
for $x>0$. Therefore, $f(x)$ is monotonically increasing.
\end{proof}

\begin{lemma}
\label{Th:convex_fundamental}
Let the function $f(x;k)$ defined as in \eqref{eq:poly_func}. Then, $f(x)$ is a convex function for $x> 0$.
\end{lemma}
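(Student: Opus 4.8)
The plan is to reduce convexity to the nonnegativity of a single expression, after a change of variable that absorbs the leading factor $x$ into the power. First I would note that for $x>0$ every base below is positive, so the exponent laws give
\[
f(x) = x\,(\alpha x^{p} + \beta x^{q})^k = \big(x^{1/k}(\alpha x^{p}+\beta x^{q})\big)^k = \big(\alpha x^{a} + \beta x^{b}\big)^k, \qquad a := p+\tfrac1k,\ \ b := q+\tfrac1k.
\]
This rewriting is the crucial simplification: it turns $f$ into a single power of a binomial of monomials, which is far more symmetric than the original product form.

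Writing $u(x) := \alpha x^{a} + \beta x^{b} > 0$, we have $f = u^{k}$ and
\[
f''(x) = k\,u^{k-2}\big[(k-1)(u')^2 + u\,u''\big].
\]
Since $k>0$ and $u^{k-2}>0$ on $x>0$, convexity of $f$ reduces to proving that the bracket $B(x) := (k-1)(u')^2 + u\,u''$ is nonnegative. Next I would expand $B$ explicitly, using $u' = \alpha a x^{a-1} + \beta b x^{b-1}$ and $u'' = \alpha a(a-1)x^{a-2} + \beta b(b-1)x^{b-2}$; the result is a combination of the three monomials $x^{2a-2}$, $x^{a+b-2}$, $x^{2b-2}$, and it suffices to show each coefficient is nonnegative.

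Collecting terms, the coefficient of $x^{2a-2}$ is $\alpha^2 a\big[(k-1)a + (a-1)\big] = \alpha^2 a\,(ak-1)$, and symmetrically that of $x^{2b-2}$ is $\beta^2 b\,(bk-1)$. The key observation is that $ak = pk+1$ and $bk = qk+1$, so these reduce to $\alpha^2 a\,(pk)$ and $\beta^2 b\,(qk)$, both strictly positive. The remaining cross coefficient of $x^{a+b-2}$ is $\alpha\beta\big[2(k-1)ab + a(a-1) + b(b-1)\big] = \alpha\beta\big[(a-b)^2 + 2kab - (a+b)\big]$; substituting $a-b=p-q$, $ab = pq + (p+q)/k + 1/k^2$, and $a+b = p+q+2/k$, the $1/k$ contributions telescope and this collapses to $\alpha\beta\big[(p-q)^2 + 2kpq + (p+q)\big]$, which is manifestly positive.

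Hence $B(x)$ is a sum of three monomials with strictly positive coefficients and positive powers of $x$, so $B(x)>0$ and $f''(x)>0$ on $x>0$; in particular $f$ is (strictly) convex there. I expect the cross term to be the main obstacle: its coefficient is not obviously signed until the substitution $a=p+\tfrac1k$, $b=q+\tfrac1k$ exposes the nonnegative form above. I also note that this argument needs only $\alpha,\beta,p,q,k>0$; the constraints $pk<1$ and $qk>1$, inherited from \eqref{eq:poly_func}, are not required for convexity.
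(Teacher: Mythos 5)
Correct, and essentially the paper's approach: the paper likewise proves convexity by showing $f''>0$ on $x>0$, computing directly that $f''(x)=\frac{k}{x}(\alpha x^{p}+\beta x^{q})^{k-2}\bigl(\alpha^{2}p(kp+1)x^{2p}+\alpha\beta(2kpq+p+q+(q-p)^{2})x^{p+q}+\beta^{2}q(kq+1)x^{2q}\bigr)$, and your substitution $a=p+\tfrac{1}{k}$, $b=q+\tfrac{1}{k}$ is a reorganization of that same computation---your three coefficients $\alpha^{2}a\,kp=\alpha^{2}p(kp+1)$, $\beta^{2}b\,kq=\beta^{2}q(kq+1)$ and $\alpha\beta\bigl((p-q)^{2}+2kpq+p+q\bigr)$ coincide exactly with the paper's. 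Your closing observation is likewise consistent with the paper's proof, which never invokes $pk<1$ or $qk>1$, only $\alpha,\beta,p,q,k>0$.
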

\begin{proof}
Taking the second derivative of $f(x)$ with respect to $x$ leads to:
\begin{align*}
\frac{d^2}{dx^2}f(x) =&\frac{k}{x}(\alpha x^p + \beta x^q)^{k-2}\left( \alpha^2p(kp+1)x^{2p} + ab(2kpq+p+q+(q-p)^2)x^{p+q} + b^2q(kq+1)x^{2q} \right).
\end{align*}
Since $\frac{d^2}{dx^2}f(x)> 0\ \forall x>0$, $f(x)$ is convex for $x>0$. 
\end{proof}

\begin{lemma}
\label{Th:convex}
Let $n\in N$. If $a = (a_1, \dots , a_n)$ is a sequence
of positive numbers, then the following inequality is satisfied
\begin{equation}
\label{eq:poly_jensen}
\frac{1}{n}\sum_{i=1}^n a_i(\alpha a_i^p + \beta a_i^q)^k \geq \left(\frac{1}{n}\sum_{i=1}^na_i\right)\left(\alpha\left(\frac{1}{n}\sum_{i=1}^na_i\right)^p + \beta\left(\frac{1}{n}\sum_{i=1}^na_i\right)^q\right)^k
\end{equation}
for $\alpha,\beta,p,q,k>0$, $pk<1$ and $qk>1$.
\end{lemma}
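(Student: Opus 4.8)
The plan is to recognize this inequality as a direct instance of Jensen's inequality applied to the function studied in the two preceding lemmas. First I would observe that, with $f$ denoting the function defined in~\eqref{eq:poly_func}, namely $f(x)=x(\alpha x^p+\beta x^q)^k$, the left-hand side of~\eqref{eq:poly_jensen} is exactly the average $\frac{1}{n}\sum_{i=1}^n f(a_i)$, while the right-hand side is $f\!\left(\frac{1}{n}\sum_{i=1}^n a_i\right)$, i.e.\ $f$ evaluated at the arithmetic mean of the $a_i$. Thus~\eqref{eq:poly_jensen} is precisely the statement
\[
f\!\left(\frac{1}{n}\sum_{i=1}^n a_i\right)\leq \frac{1}{n}\sum_{i=1}^n f(a_i).
\]

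Next I would invoke the convexity of $f$ on the positive half-line, which is exactly the content of Lemma~\ref{Th:convex_fundamental}: under the hypotheses $\alpha,\beta,p,q,k>0$, $pk<1$ and $qk>1$, the second derivative of $f$ is strictly positive for $x>0$, so $f$ is convex there. Since all the $a_i$ are assumed positive, every argument appearing in the displayed inequality lies in the domain where convexity holds, so the hypothesis needed to apply the convexity result is met.

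Finally, I would apply Jensen's inequality (Lemma~\ref{Th:Jensen}) with the uniform weights $\lambda_i=\frac{1}{n}$, which satisfy $\lambda_i\in[0,1]$ and $\sum_{i=1}^n\lambda_i=1$. For the convex function $f$, the inequality~\eqref{eq:jensen} specializes to exactly the displayed estimate, which is~\eqref{eq:poly_jensen} after rewriting both sides in terms of the $a_i$ and their mean; this completes the argument.

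I do not anticipate any genuine obstacle: once the summand is identified as $f$ from~\eqref{eq:poly_func} and its convexity is borrowed from Lemma~\ref{Th:convex_fundamental}, the result is an immediate specialization of Jensen's inequality. The only point worth verifying explicitly is that the parameter constraints used to establish convexity ($pk<1$ and $qk>1$) coincide with those assumed in the present statement, which they do, so no additional hypotheses are required.
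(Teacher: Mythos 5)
Your proposal is correct and follows exactly the paper's own argument: identify the summand as the function $f$ from~\eqref{eq:poly_func}, borrow its convexity on $(0,\infty)$ from Lemma~\ref{Th:convex_fundamental}, and apply Jensen's inequality (Lemma~\ref{Th:Jensen}) with uniform weights $\lambda_i=\frac{1}{n}$. No gaps; your explicit check that the parameter constraints $pk<1$, $qk>1$ match those needed for convexity is a welcome detail the paper leaves implicit.
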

\begin{proof}
First, recognize $\frac{1}{n}\sum_{i=1}^n a_i$ as a convex combination of the elements of the sequence $a$. Moreover, since $f(x;k)$ defined in \eqref{eq:poly_func} is convex due to Lemma \ref{Th:convex_fundamental}, then it satisfies Jensen's inequality \eqref{eq:jensen}. Evaluating $f\left(\frac{1}{n}\sum_{i=1}^n a_i\right)$ in Jensen's inequality results in expression \eqref{eq:poly_jensen}.
 \end{proof}

\begin{lemma}\cite[Property A.6.1]{Basile1992}
\label{Th:Norm}
Let $z=[z_1 \ \cdots \ z_n]^T\in\mathbb{R}^n$ and 
$$
\Vert z \Vert_p = \left(\sum_{i=1}^n|z_i|^p\right)^\frac{1}{p}
$$
then,
$$
\Vert z \Vert_l \leq \Vert z \Vert_r
$$
for $l\geq r$.
\end{lemma}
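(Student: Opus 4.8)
The plan is to exploit the positive homogeneity of the $p$-norms and reduce the statement to the unit sphere of the $r$-norm, where the inequality collapses to an elementary pointwise comparison of the summands. If $z=0$ the inequality holds trivially, so throughout I would assume $z\neq 0$.

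First I would normalize. Since $\Vert\lambda z\Vert_p=\abs{\lambda}\,\Vert z\Vert_p$ for every $p>0$ and every scalar $\lambda$, the vector $w=z/\Vert z\Vert_r$ satisfies $\Vert w\Vert_r=1$; establishing $\Vert w\Vert_l\leq 1$ then yields $\Vert z\Vert_l\leq\Vert z\Vert_r$ upon multiplying through by $\Vert z\Vert_r>0$. Hence it suffices to prove the claim under the assumption $\Vert z\Vert_r=1$, i.e. $\sum_{i=1}^n\abs{z_i}^r=1$.

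The key observation is that $\sum_{i=1}^n\abs{z_i}^r=1$ forces $\abs{z_i}\leq 1$ for every $i$, because $\abs{z_i}^r\leq\sum_{j=1}^n\abs{z_j}^r=1$. For a real number $t\in[0,1]$ and exponents $l\geq r>0$ one has $t^l\leq t^r$, since $s\mapsto t^s$ is nonincreasing on $[0,1]$. Applying this with $t=\abs{z_i}$ and summing over $i$ gives $\sum_{i=1}^n\abs{z_i}^l\leq\sum_{i=1}^n\abs{z_i}^r=1$, whence $\Vert z\Vert_l=\left(\sum_{i=1}^n\abs{z_i}^l\right)^{1/l}\leq 1=\Vert z\Vert_r$, which is exactly the asserted inequality.

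I do not anticipate any genuine obstacle here: the argument is elementary, and the only two points needing care are the normalization step and the monotonicity $t^l\leq t^r$ on $[0,1]$, both of which are immediate. Equivalently, one could phrase the same reduction as the superadditivity of the convex map $t\mapsto t^{l/r}$ (which vanishes at $0$, so $\sum_i f(b_i)\le f(\sum_i b_i)$) applied to $b_i=\abs{z_i}^r$, giving $\sum_i\abs{z_i}^l\le\left(\sum_i\abs{z_i}^r\right)^{l/r}$ and then the result after taking the $1/l$ power; this is the form closest to the convexity tools already used in the paper, such as Jensen's inequality (Lemma~\ref{Th:Jensen}), but the direct normalization is the shortest route and sidesteps the convexity bookkeeping.
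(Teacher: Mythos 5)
Your proof is correct. Note, however, that the paper does not prove this lemma at all: it is imported by citation from Basile and Marro (Property A.6.1), so there is no in-paper argument to compare against. Your normalization argument is the standard one and is complete: the reduction to $\Vert z\Vert_r=1$ via homogeneity $\Vert\lambda z\Vert_p=\abs{\lambda}\Vert z\Vert_p$ is valid for every $p>0$, the bound $\abs{z_i}\leq 1$ then gives $\abs{z_i}^l\leq\abs{z_i}^r$ termwise (including the degenerate case $z_i=0$), and summing yields the claim; the only implicit hypothesis you rightly make explicit is $l\geq r>0$, which the lemma's statement leaves tacit and which is all the paper ever uses (it invokes the lemma only as $\Vert v\Vert_2\leq\Vert v\Vert_1$). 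Your closing remark is also sound: the superadditivity of the convex map $t\mapsto t^{l/r}$ with value $0$ at $0$ gives the same inequality in one line and is stylistically closer to the convexity machinery (Lemma~\ref{Th:Jensen}, Lemma~\ref{Th:convex_fundamental}) the paper uses elsewhere, but the normalization route you chose is the shorter and cleaner of the two.
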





\end{document}